\documentclass[12pt]{article}
\usepackage{amsmath}
\usepackage{algorithmic}
\usepackage[Algorithm,ruled]{algorithm}
\usepackage{titlesec}
\usepackage{sectsty}
\usepackage[left=2cm,right=2cm,top=2cm,bottom=2cm]{geometry}
\usepackage{amssymb,amscd}
\usepackage{amsthm}
\usepackage{graphicx}
\usepackage[normalem]{ulem}
\usepackage[bookmarksnumbered,  plainpages]{hyperref}
\usepackage{multirow}
\usepackage{inputenc}

\topmargin -0.3cm \evensidemargin -0.2cm \oddsidemargin -0.2cm
\textheight 9in \textwidth 6in
\newtheoremstyle{case}{}{}{}{}{}{:}{ }{}
\theoremstyle{case}

\newcommand{\be}{\begin{equation}}
\newcommand{\ee}{\end{equation}}
\newcommand{\ben}{\begin{eqnarray*}}
\newcommand{\een}{\end{eqnarray*}}
\newtheorem{examp}{\sc Example}
\newtheorem{remk}{\sc Remark}
\newtheorem{corol}{\sc Corollary}
\newtheorem{lemma}{\sc Lemma}
\newtheorem{theorem}{\sc Theorem}
\newtheorem{defn}{\sc Definition}
\newtheorem{prop}{\sc Proposition}
\newcommand{\bet}{\begin{theorem}}
    \newcommand{\eet}{\end{theorem}}
\newcommand{\bel}{\begin{lemma}}
    \newcommand{\eel}{\end{lemma}}
\newcommand{\bed}{\begin{defn}}
    \newcommand{\eed}{\end{defn}}
\newcommand{\brem}{\begin{remk}}
    \newcommand{\erem}{\end{remk}}
\newcommand{\bex}{\begin{examp}}
    \newcommand{\eex}{\end{examp}}
\newcommand{\bcl}{\begin{corol}}
    \newcommand{\ecl}{\end{corol}}
\newcommand{\bep}{\begin{prop}}
    \newcommand{\eep}{\end{prop}}

\newcommand{\NI}{\noindent}
\newcommand{\bea}{\begin{eqnarray}}
\newcommand{\eea}{\end{eqnarray}}

\newcommand{\vsp}{\vskip 1em}

\begin{document}
\title{\large {\bf{\sc Solution of nonlinear system of equations through homotopy path}}}
\author{ A. Dutta$^{a, 1}$ and  A. K. Das$^{b, 2}$\\
\emph{\small $^{a}$Department of Mathematics, Jadavpur University, Kolkata, 700 032, India}\\	
\emph{\small $^{b}$SQC \& OR Unit, Indian Statistical Institute, Kolkata, 700 108, India}\\
\emph{\small $^{1}$Email: aritradutta001@gmail.com}\\
\emph{\small $^{2}$Email: akdas@isical.ac.in} \\
 }
\date{}

\maketitle
\begin{abstract}
	The paper aims to show the equivalency between nonlinear complementarity problem and the system of nonlinear equations. We propose a homotopy method with vector parameter $\lambda$ in finding the solution of nonlinear complementarity problem through a system of nonlinear equations.  We propose a smooth and bounded homotopy path to obtain  solution of the system of nonlinear equations under some conditions. An oligopolistic market equilibrium problem is considered to show the effectiveness of the proposed homotopy continuation method.
	\vsp
\NI{\bf Keywords:} Nonlinear complementarity problem, system of nonlinear equations, homotopy function with vector parameter, bounded smooth curve, oligopolistic market equilibrium. \\

\NI{\bf AMS subject classifications:} 90C33, 90C30, 15A69.
\end{abstract}

\section{Introduction}
A class of problems arising in various fields of sciences, can be studied via the nonlinear system of equations using various techniques. In recent years, researchers are interested  to solve  nonlinear system of equations both analytically and numerically. Several iterative
 methods have been developed using different techniques such as Taylor’s series expansion,
quadrature formulas, homotopy method, interpolation, decomposition and its various modification.
For details, see \cite{sayevand2016systems}, \cite{noor2009some}, \cite{jafari}, \cite{ojika1983deflation}, \cite{fan2003modified} and \cite{yamamura1998interval}.

Eaves and Saigal \cite{eaves1972homotopies} formed an important class of globally convergent methods for solving systems of non-linear equations, which is known as homotopy method. Such methods have been used to constructively prove the existence of solutions to many economic and engineering problems. Let $Y,X$ be two topologocal spaces and  $p, q:Y \to X$ be continuous maps. A homotopy from $p$ to $q$ is a continuous function $H:Y \times [0,1] $ $ \to X$ satisfying $H(y,0) = p(y),$ $H(y,1) = q(y)  \ \forall \ y \in Y.$ If such a homotopy exists, then $p$ is homotopic to $q$ and it is denoted by $p \simeq q.$ Let $p,q:R\to R$ any two continuous, real functions, then $p \simeq q.$ Now we define a function $H:R\times [0,1] \to R $ by $H(y,u)=(1-u)f(y)+ug(y).$ Clearly $H$ is continuous and $H(y,0)=p(y),$ $H(y,1)=q(y).$ Thus $H$ is a homotopy between $p$ and $q.$ Let $Y,X$ be two topological spaces and  Map$(Y,X)$ be the set of all continuous maps from $Y$ to $X.$ Homotopy is an equivalence relation on Map$(Y,X).$
\vsp
 The fundamental idea of the homotopy continuation method is to solve a problem by tracing a certain continuous path that leads to a solution to the problem. Thus, defining a homotopy mapping that yields a finite continuation path plays an essential role in a homotopy continuation method. The homotopy method \cite{watson1989globally} is itself an important class of globally convergent methods. Many homotopy methods are proposed for constructive proof of the existence of solutions to systems of nonlinear equations, nonlinear optimization problems, Brouwer fixed point problems, nonlinear programming, game problem and complementarity problems \cite{watson1989modern}. 
 
 We are interested in solving the  nonlinear system of equations. The nonlinear complementarity problem, is identified as an important mathematical programming problem can be converted into nonlinear system of equations.The idea of nonlinear complementarity problem is based on the concept of linear complementarity problem. For recent study on this problem and applications see \cite{das2017finiteness}, \cite{articlee14}, \cite{bookk1}, \cite{articlee7} and references therein. For details of several matrix classes in complementarity theory, see \cite{articlee1}, \cite{articlee2}, \cite{articlee9}, \cite{articlee17}, \cite{article1}, \cite{mohan2001more}, \cite{article12}, \cite{article07}, \cite{dutta2022column} and references cited therein. The problem of computing the value vector and optimal stationary strategies for structured stochastic games for discounted and undiscounded zero-sum games and quadratic Multi-objective programming problem are formulated as linear complementary problems. For details see \cite{articlee18}, \cite{mondal2016discounted}, \cite{neogy2005linear} and \cite{neogy2008mixture}. The complementarity problems are considered with respect to principal pivot transforms and pivotal method to its solution point of view. For details see \cite{articlee8}, \cite{articlee10}, \cite{das1} and \cite{neogy2012generalized}.

In this paper, we consider the modified homotopy continuation method to find the solution of the nonlinear system of  equations with vector parameter. 
\vsp
The paper is organized as follows. Section 2 presents some basic notations and results. In section 3, we prove the necessary and sufficient condition, which is a system of nonlinear equations to find the solution of nonlinear complementarity problem. A new homotopy approach with vector parameter is proposed to find the solution of nonlinear system of equations. In the last  section, we  consider an oligopoly market equilibrium  problem \cite{murphy} is used to illustrate the effectiveness of the proposed homotopy approach with vector parameter.

\section{Preliminaries}

Consider a function $f: R^n \rightarrow R^n$ , and a vector 
$z\in R^n$ such that $f= \left[\begin{array}{c}
f_1\\
f_2\\
\vdots\\
f_n\\
\end{array}\right]$ and $z= \left[\begin{array}{c}
z_1\\
z_2\\
\vdots\\
z_n\\
\end{array}\right].$ 
The complementarity problem is to find a vector $z\in R^n$ such that
\begin{equation}\label{cp}
z^Tf(z)=0, \ \ \  f(z)\geq 0 , \ \ \   z\geq 0.
\end{equation}
When the function $f$ is a nonlinear function, then it is called nonlinear complementarity problem.  For details see \cite{karamardian1969}.

The basic idea of homotopy method is to construct a homotopy continuation path 
from the auxiliary mapping g to the object mapping f. 
Suppose the given problem is to find a root of the non-linear equation f(x) = 0
and suppose g(x) = 0 is an auxiliary equation with $g(x_0)=0$. Then the
homotopy function $H:R^{n+1} \to R^n$ can be
 defined as $H(x, \mu) = $
 $ (1-\mu)f(x) + \mu g(x),$
 $ 0 \leq \mu \leq 1$ is a parameter.
Then we consider the homotopy equation $H(x, \mu) = 0,$ where $(x_0,1)$ is a known solution of the homotopy equation. Our aim is to find the solution of the equation $f(x)=0$ from the known solution of $g(x) = 0$ by solving the homotopy equation $H(x, \mu) = 0$ varrying the values of $\mu$  from $1$ to $0$. 

Now we state some results.
\begin{lemma}
	(Generalizations of Sard's Theorem\cite{Chow}) \ Let $W \subset R^n$ be an open set and $f :R^n \to R^q$ be smooth. We say $z \in R^q$ is a regular value for $f$ if $\text{Range} Df(w) = R^q $ $\forall \ w \in f^{-1}(z),$ where $Df(w)$ denotes the $n \times q$ matrix of partial derivatives of $f(w).$
\end{lemma}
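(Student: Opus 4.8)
The plan is to begin by observing that the statement, as literally written, is not a proposition but a \emph{definition}: it introduces the notion of a regular value of a smooth map $f:R^n\to R^q$ and stipulates that $z\in R^q$ qualifies precisely when $\text{Range}\,Df(w)=R^q$ holds for every $w\in f^{-1}(z)$. There is no quantified conclusion, no ``then'' clause, and hence no assertion to be established by argument. Accordingly, the only thing one can meaningfully do is confirm that the definition is well posed, and I would devote the ``proof'' to exactly that rather than manufacturing a conclusion the statement does not contain.

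The first well-posedness check concerns the empty fibre. When $f^{-1}(z)=\emptyset$ the quantifier $\forall\,w\in f^{-1}(z)$ is vacuously satisfied, so such a $z$ counts as a regular value by default; I would flag this standard convention explicitly, since it is the case that drives the usual applications, namely that any value missed by $f$ is automatically regular. The second check concerns the rank condition itself: the clause $\text{Range}\,Df(w)=R^q$ says precisely that the differential $Df(w):R^n\to R^q$ is surjective, equivalently that its matrix has full row rank $q$, which forces $n\ge q$. Here I would also reconcile the minor notational slip in the statement, since for $f:R^n\to R^q$ the Jacobian $Df(w)$ is naturally a $q\times n$ array rather than ``$n\times q$'', so that the wording agrees with the surjectivity it is meant to encode.

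The step I expect to be the real obstacle is not a computation but a question of intent: as worded there is nothing to prove, so any genuine proof would require first completing the statement to the assertion it is presumably abbreviating, namely the Sard conclusion that the set of \emph{critical} values of a smooth $f$ has Lebesgue measure zero, so that almost every $z\in R^q$ is a regular value, as established in \cite{Chow}. I would deliberately resist supplying that conclusion here, because it is not the statement on the page; instead I would recommend either relabeling the item as a \textbf{Definition}, or appending the measure-zero conclusion and then deferring its proof to \cite{Chow}. Absent such a completion, the honest verdict is that the lemma requires no proof beyond the well-posedness remarks above.
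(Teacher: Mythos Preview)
Your assessment is correct: the statement as written is a definition of ``regular value'' rather than a proposition with a conclusion to establish, and the paper itself treats it exactly that way---it is listed among the preliminaries with a citation to \cite{Chow} and no proof is given or attempted. Your well-posedness remarks (vacuous satisfaction on empty fibres, the surjectivity reading of the rank condition, and the $q\times n$ versus $n\times q$ slip) go beyond what the paper offers, but there is no discrepancy in approach to report since neither you nor the paper proves anything here.
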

\begin{lemma}\label{par}
	(Parameterized Sard Theorem \cite{Wang}) \ Let $W \subset R^p, Z \subset R^q$ be open sets, and let $\phi:W\times Z \to R^m$ be a $C^\omega$ mapping, where $\omega >\text{max}\{0,q-m\}.$ If $0\in R^m$ is a regular value of $\phi,$ then for almost all $a \in W, 0$ is a regular value of $\phi _ w=\phi(w,.).$    
\end{lemma}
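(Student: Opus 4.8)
The plan is to deduce this statement from the classical Sard theorem (the ordinary version recalled just above) applied to a projection map, which is the standard route for the parameterized version. First I would use the hypothesis that $0\in R^m$ is a regular value of $\phi$ to conclude, via the implicit function theorem, that $M:=\phi^{-1}(0)\subset W\times Z$ is a $C^\omega$ submanifold of dimension $p+q-m$, with tangent space $T_{(w,z)}M=\ker D\phi(w,z)$ at each of its points. Then I would introduce the restriction $\pi:M\to W$ of the canonical projection $(w,z)\mapsto w$; this is a $C^\omega$ map between manifolds of dimensions $p+q-m$ and $p$. The smoothness hypothesis $\omega>\max\{0,q-m\}$ is exactly what is needed here: Sard's theorem applied to $\pi$ requires differentiability class strictly larger than $\dim M-\dim W=q-m$ (and at least $C^1$), which is precisely $\omega>\max\{0,q-m\}$.

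Next I would invoke Sard's theorem for $\pi$: its set of critical values has Lebesgue measure zero in $W$, so for almost every $a\in W$ the point $a$ is a regular value of $\pi$. It then remains to prove the pointwise implication: if $a\in W$ is a regular value of $\pi$, then $0$ is a regular value of $\phi_a=\phi(a,\cdot)$. Combining this implication with the measure-zero conclusion yields the lemma.

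For the implication — which I expect to be the only genuinely substantive step — I would fix $z$ with $\phi_a(z)=0$, i.e. $(a,z)\in M$, take an arbitrary $v\in R^m$, and chase it through three linear maps. Since $D\phi(a,z)$ is surjective, choose $(\xi,\eta)\in R^p\times R^q$ with $D_w\phi(a,z)\xi+D_z\phi(a,z)\eta=v$. Since $a$ is a regular value of $\pi$, the projection restricted to $T_{(a,z)}M=\ker D\phi(a,z)$ is onto $R^p$, so there is $(\xi,\eta')\in\ker D\phi(a,z)$ with the same first component $\xi$, giving $D_w\phi(a,z)\xi=-D_z\phi(a,z)\eta'$. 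Substituting, $v=D_z\phi(a,z)(\eta-\eta')=D\phi_a(z)(\eta-\eta')$, so $D\phi_a(z)$ is surjective and $0$ is a regular value of $\phi_a$. The main obstacle is bookkeeping: keeping the three derivatives $D\phi$, $D\pi|_M$, and $D_z\phi$ straight, and checking carefully that the smoothness exponent in the hypothesis is exactly the one Sard's theorem demands for $\pi$; beyond that everything is routine differential topology, so in the write-up I would give the above sketch and defer finer details to \cite{Wang} (or a standard reference such as Guillemin--Pollack).
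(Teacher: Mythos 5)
The paper states this lemma purely as a quoted preliminary result from \cite{Wang} and supplies no proof of its own, so there is no internal argument to compare yours against. Your proposal is the standard and correct derivation of the parameterized Sard theorem from the classical one: pass to the $(p+q-m)$-dimensional $C^\omega$ manifold $M=\phi^{-1}(0)$, apply ordinary Sard to the projection $\pi:M\to W$ (for which the hypothesis $\omega>\max\{0,q-m\}$ is precisely the required differentiability class, since $\dim M-\dim W=q-m$), and convert regularity of $a$ for $\pi$ into surjectivity of $D_z\phi(a,z)$ by the linear-algebra chase you describe; that chase is carried out correctly. The only points worth making explicit in a full write-up are that $\omega\ge 1$ (so the preimage theorem applies), that points of $W$ outside $\pi(M)$ are vacuously regular values, and that $M$, being a submanifold of $R^{p+q}$, is covered by countably many charts so that the measure-zero conclusion transfers to $W$.
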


\begin{lemma}\label{inv}
	(The inverse image theorem \cite{Wang}) \ Let $\phi : W \subset R^p \to R^q$ be $C^\omega$ mapping, where $\omega >\text{max}\{0,p-q\}.$ Then $\phi^{-1}(0)$ consists of some $(p-q)$-dimensional $C^\omega$ manifolds. 
\end{lemma}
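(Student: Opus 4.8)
The statement is the classical regular value (preimage) theorem, so the natural route is through the implicit function theorem. Note first that, as the lemma will be applied (cf.\ Lemma~\ref{par}, whose conclusion is exactly that $0$ is a regular value of $\phi_w$), the hypothesis in force is that $0$ is a \emph{regular value} of $\phi$, i.e.\ $D\phi(x)$ has rank $q$ for every $x\in\phi^{-1}(0)$; I prove the statement under that hypothesis. The plan is to build, around each point of $\phi^{-1}(0)$, a local $C^\omega$ parametrization of dimension $p-q$, and then to observe that these pieces assemble into a $C^\omega$ manifold.

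First I would fix $x_0\in\phi^{-1}(0)$. Since $D\phi(x_0)$ has rank $q$, it has an invertible $q\times q$ submatrix; after a permutation of the coordinates of $R^p$ (a $C^\omega$ diffeomorphism, hence harmless) write $R^p=R^{p-q}\times R^q$ with points $(u,v)$ and $x_0=(u_0,v_0)$, arranged so that the partial derivative $D_v\phi(u_0,v_0)\in R^{q\times q}$ is invertible. The $C^\omega$ implicit function theorem then provides open neighborhoods $U\ni u_0$ and $V\ni v_0$ and a $C^\omega$ map $g:U\to V$ with $g(u_0)=v_0$ such that, for $(u,v)\in U\times V$, one has $\phi(u,v)=0$ if and only if $v=g(u)$. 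Hence $\phi^{-1}(0)\cap(U\times V)$ is the graph of $g$, and $u\mapsto(u,g(u))$ is a $C^\omega$ homeomorphism of $U$ onto this piece of $\phi^{-1}(0)$, with $C^\omega$ inverse given by the projection onto the $u$-coordinates; this is a $(p-q)$-dimensional $C^\omega$ chart.

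Letting $x_0$ range over $\phi^{-1}(0)$ yields an atlas covering $\phi^{-1}(0)$. It remains to check $C^\omega$-compatibility of two such charts on their overlap, but each transition map is a composition of a coordinate projection with one of the graph maps $u\mapsto(u,g(u))$, both $C^\omega$, so compatibility is automatic. Therefore $\phi^{-1}(0)$ is a $(p-q)$-dimensional $C^\omega$ manifold; in particular it consists of $(p-q)$-dimensional $C^\omega$ manifolds (its connected components), as claimed.

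The step I expect to require the most care is the first one --- extracting the invertible $q\times q$ block of $D\phi(x_0)$ and relabeling coordinates so that the implicit function theorem applies in the stated form --- and this is precisely where the regular-value hypothesis is indispensable: without it $D\phi(x_0)$ may fail to be surjective and $\phi^{-1}(0)$ need not be a manifold at all (for instance $\phi(x)=x^Tx$ on $R^p$ at the value $0$). Everything past that point is the standard implicit-function-theorem machinery together with routine bookkeeping; the generous smoothness assumption $\omega>\max\{0,p-q\}$ is more than enough to run it and to keep all the maps in class $C^\omega$.
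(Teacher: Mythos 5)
Your proposal is correct, but there is nothing in the paper to compare it against: the paper states this lemma as a quoted preliminary result, attributed to the reference cited there, and supplies no proof of its own. Your argument is the standard one for the regular value (preimage) theorem --- extract an invertible $q\times q$ block of $D\phi(x_0)$, permute coordinates, apply the $C^\omega$ implicit function theorem to write $\phi^{-1}(0)$ locally as a graph, and assemble the resulting $(p-q)$-dimensional charts into a $C^\omega$ atlas --- and it is sound. The most valuable thing you did is to notice that the lemma as printed omits the hypothesis that $0$ is a regular value of $\phi$, without which the conclusion is false (your example $\phi(x)=x^Tx$ makes the point); in the paper that hypothesis is supplied externally, since the lemma is only invoked after the Parameterized Sard Theorem has guaranteed that $0$ is a regular value of the relevant map for almost all choices of the parameter. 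One minor remark: the condition $\omega>\max\{0,p-q\}$ is inherited from the Sard-type statement and is indeed stronger than what your argument needs ($C^1$ plus regularity already suffices), as you observed.
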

\begin{lemma}\label{cl}
	(Classification theorem of one-dimensional smooth manifold \cite{N}) \ One-dimensional smooth manifold is diffeomorphic to a unit circle or a unit interval.
\end{lemma}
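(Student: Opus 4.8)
The plan is to establish the classification by the classical maximal arc-length parametrization argument. First I would reduce to the connected case, since a manifold is the disjoint union of its connected components, and observe that a one-dimensional connected manifold is second countable and hence paracompact, so it carries a smooth Riemannian metric obtained from a partition of unity. With such a metric, every smooth curve has a well-defined length, and near any point a chart can be reparametrized by arc length, producing a unit-speed local diffeomorphism from an interval into the manifold $M$.

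Next I would consider the collection of all arc-length parametrizations, i.e. pairs $(I,\gamma)$ where $I\subseteq\mathbb{R}$ is an interval containing $0$ and $\gamma:I\to M$ is a unit-speed smooth embedding. This collection is nonempty and is partially ordered by extension; since the union of a chain of such parametrizations is again one of them, Zorn's lemma yields a maximal element $(I^{*},\gamma^{*})$. The core of the argument is to prove that $\gamma^{*}$ is surjective. If it were not, then $\gamma^{*}(I^{*})$ would be a nonempty proper open subset of the connected space $M$, hence would have a boundary point $q\in M$. Choosing an arc-length chart around $q$, I would show that this chart overlaps $\gamma^{*}$ along a subinterval on which the two unit-speed parametrizations coincide up to orientation and translation, so that they can be glued. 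Two subcases then occur: either the glued map extends $\gamma^{*}$ to a strictly larger interval, contradicting maximality, or the curve closes up, meaning $\gamma^{*}$ descends from a periodic map $\mathbb{R}\to M$, in which case $M\cong S^{1}$.

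If $\gamma^{*}$ is surjective and $M$ is not the circle, then $\gamma^{*}:I^{*}\to M$ is a diffeomorphism, so $M$ is diffeomorphic to an interval; according to whether the endpoints of $I^{*}$ are attained this is a rescaling of $[0,1]$, $[0,1)$, or $(0,1)\cong\mathbb{R}$, all of which the statement subsumes under "a unit interval." The main obstacle I anticipate is the gluing step: one must verify that where an arc-length chart meets $\gamma^{*}$ the overlap is a single subinterval rather than several disjoint pieces, and that there the two unit-speed parametrizations agree up to sign and a constant shift, so that the patched map is a well-defined smooth embedding. This rigidity is exactly where one-dimensionality enters, since connected open subsets of $\mathbb{R}$ are intervals and unit-speed reparametrizations of a $1$-manifold are determined up to orientation and translation. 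Once this rigidity is in hand, the remaining ingredients — paracompactness, the Riemannian metric, Zorn's lemma, and the boundary-point argument for surjectivity — are routine.
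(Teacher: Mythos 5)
The paper does not prove this lemma: it is quoted verbatim as a known classical result (with the citation \cite{N}) in the Preliminaries, alongside Sard's theorem and the inverse image theorem, and is used only as a black box to conclude that the one-dimensional manifold $\mathcal{H}_{\tilde{x}^{(0)}}^{-1}(0)$ consists of arcs and circles. So there is no in-paper argument to compare yours against; your proposal has to be judged on its own. What you describe is the standard Milnor/Guillemin--Pollack proof by maximal arc-length parametrization, and its overall architecture (reduce to the connected case, put a Riemannian metric on $M$ via a partition of unity, take a maximal unit-speed embedding by Zorn's lemma, prove surjectivity by a boundary-point argument, and split into the interval case and the case where the curve closes up into $S^1$) is correct.

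Two points deserve care. First, ``a one-dimensional connected manifold is second countable'' is not something you can observe; it is an extra hypothesis (the long line is a connected Hausdorff smooth $1$-manifold that is not paracompact and is not diffeomorphic to any interval). In the context of this paper the manifolds in question are subsets of Euclidean space, so the hypothesis is automatic, but your proof should state it as an assumption rather than derive it. Second, the step you flag as the ``main obstacle'' is in fact the entire content of the theorem, and your sketch slightly misstates it: the overlap of two unit-speed parametrizations need not be a single subinterval --- it can have exactly two components, and that is precisely the configuration that forces $M\cong S^{1}$. The correct lemma is that the transition map on each component of the overlap is $t\mapsto \pm t+c$, so its graph consists of segments of slope $\pm 1$ in the product of the two parameter intervals that must run from boundary to boundary; a short case analysis then shows there are at most two such segments, with one segment giving the gluing/extension and two segments giving the circle. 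Your proposal names the right rigidity principle but does not carry out this count, so as written it is an accurate outline of the classical proof rather than a complete one. Finally, note that the lemma as stated in the paper is loose: without connectedness it is false, and ``a unit interval'' silently conflates $[0,1]$, $[0,1)$, and $(0,1)$, which are pairwise non-diffeomorphic; your remark that the statement ``subsumes'' all three is the charitable reading, but it is worth making explicit.
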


Now in the next section we will solve the nonlinear complementarity problem  \ref{cp} using homotopy method with vector parameter.
\section{Main results}

 \begin{theorem}
    	Let $\phi:R \to R$ be any increasing function such that $\phi(0)=0$. Then $z$ solves the complementarity problem \ref{cp} if and only if 
    	\begin{equation}\label{ma}
    	\phi((f_i(z)-z_i)^2) - \phi(f_i(z) |f_i(z)|) - \phi(z_i|z_i|)=0
    	\end{equation}
       \end{theorem}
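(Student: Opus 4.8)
The plan is to reduce \eqref{ma} to a purely scalar statement and then settle it by a short case analysis on signs. First I would note that \eqref{ma} is meant coordinatewise, one equation for each $i\in\{1,\dots,n\}$, and that the complementarity conditions $f(z)\ge 0$, $z\ge 0$, $z^{T}f(z)=0$ are equivalent to $f_i(z)\ge 0$, $z_i\ge 0$, $z_if_i(z)=0$ for every $i$, since $z^{T}f(z)=\sum_i z_if_i(z)$ is a sum of nonnegative terms that must all vanish. So, writing $u:=f_i(z)$ and $v:=z_i$, it suffices to prove, for reals $u,v$,
\[
u\ge 0,\ v\ge 0,\ uv=0 \quad\Longleftrightarrow\quad \phi\big((u-v)^2\big)-\phi\big(u|u|\big)-\phi\big(v|v|\big)=0 .
\]
I would also record the two elementary facts about $\phi$ that drive everything: since $\phi$ is increasing with $\phi(0)=0$, one has $\phi(t)\ge 0$ for $t\ge 0$ and $\phi(t)\le 0$ for $t\le 0$, and (reading ``increasing'' as strictly increasing, which is what is really needed, since $\phi\equiv 0$ would make \eqref{ma} vacuous) these inequalities are strict for $t\ne 0$.

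For the forward direction, if $u\ge 0$, $v\ge 0$ and $uv=0$, then one of $u,v$ vanishes, say $v=0$; then $(u-v)^2=u^2=u|u|$ because $u\ge 0$, and $\phi(v|v|)=\phi(0)=0$, so the left-hand side of \eqref{ma} collapses to $\phi(u^2)-\phi(u^2)-0=0$. The case $u=0$ is symmetric.

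For the converse I would run through the sign cases of $(u,v)$. If $u,v$ are both negative, then $u|u|,v|v|<0$, so $\phi(u|u|)+\phi(v|v|)<0\le\phi((u-v)^2)$, a contradiction. If the signs are mixed, say $u\ge 0>v$, then $(u-v)^2=(u+|v|)^2>u^2$ gives $\phi((u-v)^2)>\phi(u^2)=\phi(u|u|)$, while $\phi(v|v|)<0$ forces $\phi(u|u|)+\phi(v|v|)<\phi(u|u|)$, again a contradiction; the case $v\ge 0>u$ is symmetric. This leaves $u\ge 0$, $v\ge 0$; assuming without loss of generality $u\ge v\ge 0$, one has $0\le u-v\le u$, hence $(u-v)^2\le u^2$ and $\phi((u-v)^2)\le\phi(u^2)$, whereas \eqref{ma} now reads $\phi((u-v)^2)=\phi(u^2)+\phi(v^2)$ with $\phi(v^2)\ge 0$; combining, all these inequalities must be equalities, so $\phi(v^2)=0=\phi(0)$, hence $v=0$ and $uv=0$.

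The forward direction and the ``both negative'' / ``mixed sign'' subcases are routine sign bookkeeping; the only place needing genuine care is the all-nonnegative case, where the inequality $(u-v)^2\le\max(u,v)^2$ for $u,v\ge 0$ is the crux, and strict monotonicity of $\phi$ is what lets one conclude $v=0$ from $\phi(v^2)=0$. Accordingly, the main (and essentially the sole) conceptual point is to make the monotonicity hypothesis explicit — strict increase, or at least $\phi(t)\ne 0$ for $t\ne 0$ — without which the stated equivalence fails.
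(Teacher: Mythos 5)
Your proof is correct and follows essentially the same route as the paper's: the forward direction by direct substitution into the two complementary cases, and the converse by a sign case analysis on $(f_i(z),z_i)$ driven by the monotonicity of $\phi$ (the paper merely organizes the cases as ``$f(z)\ge 0$'', then ``$z\ge 0$'', then complementarity, rather than by sign pattern, but the inequalities used are the same). Your explicit observation that strict monotonicity of $\phi$ --- or at least $\phi(t)\neq 0$ for $t\neq 0$ --- is indispensable is well taken: the paper invokes this tacitly in the sufficiency direction (e.g.\ in concluding $\phi(-f_i(z)^2)<0$ and $\phi(z_i^2)>0$) even though its stated hypothesis reads only ``increasing''.
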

   \begin{proof}
   	Necessery. For each $i=1,2,\ldots ,n$, either $z_i=0 , f_i(z)\geq 0$  or $f_i(z)=0 , z_i \geq 0$.\\ If $z_i=0,f_i(z)\geq 0 $ then
   	$\phi((f_i(z)-z_i)^2) - \phi(f_i(z) |f_i(z)|) - \phi(z_i|z_i|) $\\
   	$ = \phi((f_i(z))^2) - \phi(f_i(z) |f_i(z)|)=\phi((f_i(z))^2) - \phi((f_i(z))^2)=0$.\\
   	If $f_i(z)=0 , z_i \geq 0$ then  $\phi((f_i(z)-z_i)^2) - \phi(f_i(z) |f_i(z)|) - \phi(z_i|z_i|) $\\ 
   	 	$ = \phi((z_i)^2) - \phi(z_i |z_i|)=\phi((z_i)^2) - \phi((z_i)^2)=0$.\\
   	 	So the solution of $(1.1)$ satisfies $(1.2)$.
   	 	\\
   	 	Sufficient. (a) To show that $f(z) \geq 0$ assume the contrary, i.e. $f_i(z) < 0$ for some $i=1,2,\dots,n$. Then
   	 	 \begin{center}
   	 	 $0\leq \phi((f_i(z)-z_i)^2)$
   	 	$= \phi(f_i(z)|f_i(z)|) + \phi(z_i|z_i|)$ $= \phi(-f_i(z)^2) + \phi(z_i|z_i|)$ $ < \phi(z_i|z_i|)$
   	 \end{center}
      This implies that $ \phi (z_i|z_i|) > 0$ $ \implies z_i|z_i| >0 $ $ \implies z_i >0$ and  $\phi((f_i(z)-z_i)^2)< \phi(z_i|z_i|)$ $\implies ((f_i(z)-z_i)^2) < z_i|z_i|$.\\
   	 	But for $z_i >0$,  $(z_i)^2 < (f_i(z)-z_i)^2$   [as $(f_i(z)-z_i)<0, z_i >0 $ then $ (f_i(z)-z_i)<0$, $|f_i(z)-z_i|>z_i$ so $(f_i(z)-z_i)^2 >z_i^2$ ].\\
   	 	 So, it contradicts that $f_i(z) < 0$. So $f_i(z) \geq 0$.\\
   	 	 (b)  To show that $z \geq 0$ interchange the roles of $ z_i$ and $f_i(z)$.\\
   	 	 (c)From (a) and (b) we have that  $z \geq 0$ and  $f(z) \geq 0$. To show $z^Tf(z)=0$ assume the contrary $z_i >0$ and $f_i(z)>0$ for some $ i =1,2,\ldots,n.$\\
   	 	 If $f_i(z)\geq z_i$, then $\phi( (f_i(z)-z_i)^2) < \phi( (f_i(z))^2) + \phi( (z_i)^2)= \phi(f_i(z) |f_i(z)|) + \phi(z_i|z_i|)$. This contradicts that 	$\phi((f_i(z)-z_i)^2) = \phi(f_i(z) |f_i(z)|) + \phi(z_i|z_i|) $.\\
   	 	 Similarly, for  $z_i(z)\geq f_i(z),$ interchanging the roles of $z_i$ and $f_i(z)$, we get a contradiction.
   \end{proof}

  Hence it is shown that the complementarity problem of finding a $z\in R^n$ satisfying $z^Tf(z)=0,\ \ \ f(z) \geq 0, \ \ \ z \geq 0$ where  $f: R^n \rightarrow R^n$ is equivalent to the problem of solving system of n nonlinear equations in n variables.
  	\begin{equation}{\label{ne}}
  \psi_i(z)=\phi((f_i(z)-z_i)^2) - \phi(f_i(z) |f_i(z)|) - \phi(z_i|z_i|)=0 \ \ \ \forall   i \in \{1,2,\cdots n\},
  \end{equation}
  where $\phi$ is an increasing function defined by $\phi:R \to R$ such that $\phi(x)=x^3.$
   Now $\frac{\partial \psi_i}{\partial z_j}$ $=\phi^{'}((f_i(z)-z_i)^2)2(f_i(z)-z_i)(\frac{\partial f_i}{\partial z_j}-\delta_{ij})$  $-\phi^{'}(f_i(z)|f_i(z)|)2f_i(z)sgn(f_i(z))\frac{\partial f_i}{\partial z_j}$  $-\phi^{'}(z_i|z_i|)2z_isgn(z_i) \delta_{ij}$
    where 
    \[   
    sgn(x) = 
    \begin{cases}
    \  1 &\quad\text{if } x  > 0\\
    \ -1 &\quad\text{if } x < 0\\
    \end{cases}
    \]
     \quad
    
    \begin{theorem}
      Let $z$ be the nondegenerate solution of nonlinear complementarity problem \ref{cp}, i.e. $z +f(z) >0$. Let $\cal{J}$ $(f (z))$, the Jacobian of $f$ at $z$, have
nonsingular principal minors and let $\phi: R \to R$ be a differentiable strictly increasing function such that $\phi'(y) > 0$ for all $y>0$ and $\phi(0)=0.$ Then $z$ solves \ref{ne} and
the Jacobian of $\psi$ at $z$, $\cal{J}$ $(\psi (z))$  is nonsingular.
  
    \end{theorem}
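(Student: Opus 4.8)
The plan is to prove the two assertions in turn. That $z$ solves \ref{ne} is immediate: since $\phi(0)=0$, the necessity part of the preceding theorem applies with no change and yields $\psi_i(z)=\phi((f_i(z)-z_i)^2)-\phi(f_i(z)|f_i(z)|)-\phi(z_i|z_i|)=0$ for every $i$. The real content is the nonsingularity of $\mathcal{J}(\psi(z))$.

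For that, I would first record the structure of a nondegenerate solution: from $z\ge 0$, $f(z)\ge 0$, $z^Tf(z)=0$ and $z+f(z)>0$, for each index $i$ exactly one of $z_i,f_i(z)$ is positive and the other vanishes. Set $\alpha=\{i:z_i>0,\ f_i(z)=0\}$ and $\beta=\{i:z_i=0,\ f_i(z)>0\}$, so $\alpha\cup\beta=\{1,\dots,n\}$. Evaluating the displayed expression for $\partial\psi_i/\partial z_j$ at $z$ case by case (using that $x\mapsto x|x|$ is differentiable with derivative $2|x|$, so the $\mathrm{sgn}$ terms are harmless at the origin): for $i\in\alpha$ the middle term carries the factor $|f_i(z)|=0$ and disappears while the first and third terms combine, leaving row $i$ of $\mathcal{J}(\psi(z))$ equal to $-2\phi'(z_i^{\,2})z_i$ times row $i$ of $\mathcal{J}(f(z))$; for $i\in\beta$ the third term carries $|z_i|=0$ and disappears while the first and middle terms combine, leaving row $i$ equal to $-2\phi'(f_i(z)^2)f_i(z)\,e_i^{\,T}$, with $e_i$ the $i$-th standard basis row vector. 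By the hypotheses on $\phi$, every multiplier $-2\phi'(z_i^{\,2})z_i$ (for $i\in\alpha$) and $-2\phi'(f_i(z)^2)f_i(z)$ (for $i\in\beta$) is nonzero.

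Hence $\mathcal{J}(\psi(z))=DM$, where $D$ is the nonsingular diagonal matrix of these multipliers and $M$ has $i$-th row equal to the $i$-th row of $\mathcal{J}(f(z))$ when $i\in\alpha$ and equal to $e_i^{\,T}$ when $i\in\beta$. Expanding $\det M$ by cofactors along the rows indexed by $\beta$---each containing a single nonzero entry, a $1$, in its diagonal position---collapses the determinant, up to sign, to the principal minor $\det\bigl(\mathcal{J}(f(z))_{\alpha\alpha}\bigr)$ of $\mathcal{J}(f(z))$ (equal to $1$ when $\alpha=\varnothing$). Since $\mathcal{J}(f(z))$ has nonsingular principal minors, this minor is nonzero, so $M$, and therefore $\mathcal{J}(\psi(z))=DM$, is nonsingular.

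The one delicate point I anticipate is the Jacobian bookkeeping: one must check carefully that $x|x|$ is $C^1$ with vanishing derivative at $0$---so the formula for $\partial\psi_i/\partial z_j$ is legitimate at a boundary solution---and that the claimed cancellations occur exactly in the two cases. After that, the factorization $DM$ and the cofactor expansion along the $\beta$-rows are routine, and the assumption that $\mathcal{J}(f(z))$ has nonsingular principal minors is precisely what the argument consumes.
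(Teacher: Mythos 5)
Your proposal is correct and follows essentially the same route as the paper: both identify the two index sets ($z_i>0,\ f_i(z)=0$ versus $z_i=0,\ f_i(z)>0$), observe that the corresponding rows of $\mathcal{J}(\psi(z))$ are nonzero scalar multiples of rows of $\mathcal{J}(f(z))$ and of standard basis vectors respectively, and reduce nonsingularity to a nonsingular principal minor of $\mathcal{J}(f(z))$. Your factorization $\mathcal{J}(\psi(z))=DM$ with the explicit cofactor expansion along the $\beta$-rows is in fact a cleaner write-up of the determinant step that the paper's sum-of-two-matrices presentation only asserts.
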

    \begin{proof}
    The Jacobian matrix of $\psi(z)$ is defined by $\cal{J}$ $(\psi (z))=$   $\left[\begin{array}{rrrr}
    		\frac{\partial \psi_1}{\partial z_1} &  \frac{\partial \psi_1}{\partial z_2}  & \cdots & \frac{\partial \psi_1}{\partial z_n} \\
    \frac{\partial \psi_2}{\partial z_1}	& \frac{\partial \psi_2}{\partial z_2} & \cdots & \frac{\partial \psi_2}{\partial z_n} \\
    \vdots & \vdots  & \vdots & \vdots\\
    \frac{\partial \psi_n}{\partial z_1} & \frac{\partial \psi_n}{\partial z_2} & \cdots & \frac{\partial \psi_n}{\partial z_n} 
    	\end{array}\right],$
    
 where  $\frac{\partial \psi_i}{\partial z_i}=\phi^{'}((f_i(z)-z_i)^2)2(f_i(z)-z_i)(\frac{\partial f_i}{\partial z_i} - 1)  -\phi^{'}(f_i(z)|f_i(z)|)2f_i(z)sgn(f_i(z))\frac{\partial f_i}{\partial z_i}  -\phi^{'}(z_i|z_i|)2z_i sgn(z_i)$ and 
  $\frac{\partial \psi_i}{\partial z_j}=\phi^{'}((f_i(z)-z_i)^2)2(f_i(z)-z_i)(\frac{\partial f_i}{\partial z_j})-$\\$\phi^{'}(f_i(z)|f_i(z)|)2f_i(z)sgn(f_i(z))\frac{\partial f_i}{\partial z_j}  $ for $i \neq j.$\\
 Assume that $f_i (z)=0$ for $i=1,2,\cdots, n_1, n_1\leq n$ and $f_i (z)>0$ for $i=n_1 +1, n_1 +2, \cdots n.$ Hence by nondegeneracy of $z$, $z_i>0$ for $i=1,2,\cdots n_1, n_1\leq n,$ $z_i=0$ for $i=n_1 +1, n_1 +2, \cdots n.$\\
 Now $\cal{J}$ $(\psi (z) )=$ $\left[\begin{array}{rrrr}
    		-\phi^{'}((z_1)^2)2z_1 & 0 & \cdots & 0\\ 
    		0 & -\phi^{'}((z_2)^2)2z_2 &  \cdots & 0 \\
    		\vdots & \vdots & \cdots & \vdots\\
    		0 & 0 &  \cdots & -\phi^{'}((z_{n_1})^2)2z_{n_1} \\
    	   		0 & 0 & \cdots & 0\\
    		\vdots & \vdots & \cdots & \vdots\\
    		0 & 0 & \cdots & 0\\
      	\end{array}\right]$ $\cal{J}$ $(f (z) )$ $+$ $\left[\begin{array}{rrrr}
      	0 & 0 & \cdots & 0\\
    		\vdots & \vdots & \cdots & \vdots\\
    		0 & 0 & \cdots & 0\\
    		-\phi^{'}(({f_{n_1 +1}}(z))^2)2{f_{n_1 +1}}(z) & 0 & \cdots & 0\\ 
    		0 & -\phi^{'}(({f_{n_1 +2}}(z))^2)2{f_{n_1 +2}}(z) &  \cdots & 0 \\
    		\vdots & \vdots & \cdots & \vdots\\
    		0 & 0 &  \cdots & -\phi^{'}(({f_n}(z))^2)2{f_n}(z) \\
    	   		0 & 0 & \cdots & 0\\
    		\vdots & \vdots & \cdots & \vdots\\
    		0 & 0 & \cdots & 0\\
      	\end{array}\right]$. Since $\cal{J}$ $(f(z) )$ has nonsingular principal minors and $\phi'(y)>0$ for $y>0,$ the Jacobian of $\psi(z),$ $\cal{J}$ $(\psi(z) )$  is nonsingular. 
\end{proof}

\subsection{Homotopy based solution with vector parameter $\tilde{\lambda} \in R^n$}
The basic idea of homotopy method with $\tilde{\lambda} \in R^n$ is to construct a multidimentional homotopy path to find the solution of the object fuction $\tilde{f}(\tilde{x})=0$ varying each component of $\tilde{\lambda}$ from $1$ to $0.$ We consider the homotopy function\\ $\mathcal H(\tilde{x},\tilde{x}^{(0)},\tilde{\lambda})=\tilde{f}(\tilde{x})-\tilde{\lambda}\tilde{f}(\tilde{x}^{(0)}),$ where each component of  $\tilde{f}(\tilde{x}^{(0)}), \ \  f_i(\tilde{x}^{(0)})\neq 0 \ \forall i \in \{1,2, \cdots, n\}$  and the product term $\tilde{\lambda}\tilde{f}(\tilde{x}^{(0)})$ is a componentwise product that is $\tilde{\lambda}\tilde{f}(\tilde{x}^{(0)})=\left[\begin{array}{c}
{\lambda_1}f_1(\tilde{x}^{(0)})\\
{\lambda_2}f_2(\tilde{x}^{(0)})\\
\vdots\\
{\lambda_n}f_n(\tilde{x}^{(0)})\\
\end{array}\right],$ where $\tilde{\lambda}=\left[\begin{array}{c}
{\lambda_1}\\
{\lambda_2}\\
\vdots\\
{\lambda_n}\\
\end{array}\right]$ and $\tilde{f}(\tilde{x}^{(0)})=\left[\begin{array}{c}
f_1(\tilde{x}^{(0)})\\
f_2(\tilde{x}^{(0)})\\
\vdots\\
f_n(\tilde{x}^{(0)})\\
\end{array}\right].$ \\ In this method our main aim is to vary each component $\lambda_i=\frac{f_i(\tilde{x})}{f_i(\tilde{x}^{(0)})}$ of $\tilde{\lambda}$ from $1$ to $0.$

Now we solve the nonlinear equation \ref{ma} using the homotopy function with vector parameter $\tilde{\lambda}=[\lambda_1,\lambda_2, \cdots, \lambda_n]^t \in R^n.$ Let $\psi_i(z)=\phi((f_i(z)-z_i)^2) - \phi(f_i(z) |f_i(z)|) - \phi(z_i|z_i|)=0.$ Consider the set $F_{\mathcal{H}}=\{\tilde{x}:\tilde{x}\in R^n\}$ and $\tilde{F}_{\mathcal{H}}=\{(\tilde{x}, \tilde{\lambda}):(\tilde{x}, \tilde{\lambda}) \in R^n \times (0,1]^n\}.$ The homotopy function can be given as, 
\begin{equation}{\label{hybrid}}
\mathcal H(\tilde{x},\tilde{x}^{(0)},\tilde{\lambda})=\psi(\tilde{x})- \tilde{\lambda}\psi(\tilde{x}^{(0)})=\left[\begin{array}{c}
\psi_1(\tilde{x})-{\lambda_1}\psi_1(\tilde{x}^{(0)})\\
\psi_2(\tilde{x})-{\lambda_2}\psi_2(\tilde{x}^{(0)})\\
\vdots\\
\psi_n(\tilde{x})-{\lambda_n}\psi_n(\tilde{x}^{(0)})
\end{array}\right]
\end{equation}
Where, $\tilde{x}^{(0)}$ is the initial known value of the vector $\tilde{x}\in R^n$ such that $ \psi_i(\tilde{x}^{(0)}) \neq 0 \  \forall i \ \in \{1,2, \cdots, n\}$  and $\tilde{\lambda} \in R^n.$ In this modification, $\tilde\lambda$ is taken as a vector.

\begin{theorem}\label{reg}
   If the Jacobian matrix $\frac{\partial{\psi}(\tilde{x}^{(0)})}{\partial\tilde{x}^{(0)}}$ at initial point $\tilde{x}^{(0)}$ is nonsingular, then for almost all initial points $\tilde{x}^{(0)}\in \tilde{F}_\mathcal{H},$ $0$ is a regular value of the homotopy function $\mathcal{H}:R^{6n} \times (0,1]^n \to R^{6n}$ and the zero point set $\mathcal{H}_{\tilde{x}^{(0)}}^{-1}(0)=\{(\tilde{x},\tilde{\lambda})\in \tilde{F}_{\mathcal{H}}:\mathcal{H}_{\tilde{x}^{(0)}}(\tilde{x},\tilde{\lambda})=0\}$ contains a smooth curve $\Gamma_{\tilde{x}^{(0)}}$ starting from $(\tilde{x}^{(0)},e).$  
\end{theorem}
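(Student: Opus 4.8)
The plan is to run the standard parametrised–Sard homotopy argument, taking the starting point $\tilde x^{(0)}$ itself to be the distinguished ``parameter'' of Lemma \ref{par}. First I would view the map in \ref{hybrid} as a function of \emph{all} of its arguments, $\mathcal H(\tilde x,\tilde x^{(0)},\tilde\lambda)$, on the open set of admissible data where $\psi_i(\tilde x^{(0)})\neq 0$ for every $i$ and where $\partial\psi(\tilde x^{(0)})/\partial\tilde x^{(0)}$ is nonsingular, and show that $0$ is a regular value of this full map. The only block of $D\mathcal H$ that matters is the one in the $\tilde x^{(0)}$-direction: since the $i$-th component of $\mathcal H$ is $\psi_i(\tilde x)-\lambda_i\psi_i(\tilde x^{(0)})$, that block equals $-\,\mathrm{diag}(\lambda_1,\dots,\lambda_n)\,\frac{\partial\psi(\tilde x^{(0)})}{\partial\tilde x^{(0)}}$. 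For $\tilde\lambda\in(0,1]^n$ each $\lambda_i>0$, so $\mathrm{diag}(\lambda_i)$ is invertible, and the hypothesis makes $\partial\psi(\tilde x^{(0)})/\partial\tilde x^{(0)}$ invertible; hence this $n\times n$ block alone is nonsingular, $D\mathcal H$ has full row rank at every zero of $\mathcal H$, and therefore $0$ is a regular value of $\mathcal H$.

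Next I would invoke the Parameterized Sard Theorem (Lemma \ref{par}) with $\tilde x^{(0)}$ as the parameter: for almost every admissible $\tilde x^{(0)}$, $0$ is a regular value of the restricted map $\mathcal H_{\tilde x^{(0)}}(\tilde x,\tilde\lambda)=\mathcal H(\tilde x,\tilde x^{(0)},\tilde\lambda)$. Fixing such an $\tilde x^{(0)}$ and applying the inverse image theorem (Lemma \ref{inv}), the zero set $\mathcal H_{\tilde x^{(0)}}^{-1}(0)$ is a smooth manifold. To locate the promised curve I would note that $\mathcal H(\tilde x^{(0)},\tilde x^{(0)},e)=\psi(\tilde x^{(0)})-e\,\psi(\tilde x^{(0)})=0$, so $(\tilde x^{(0)},e)$ lies in the zero set; since $0$ is a regular value and $\partial\mathcal H/\partial\tilde x$ at $(\tilde x^{(0)},e)$ equals the nonsingular matrix $\partial\psi(\tilde x^{(0)})/\partial\tilde x^{(0)}$, the implicit function theorem presents the zero set near $(\tilde x^{(0)},e)$ as a graph $\tilde x=\tilde x(\tilde\lambda)$, so one can leave $(\tilde x^{(0)},e)$ and move into the interior where $\tilde\lambda\in(0,1)^n$. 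Restricting the motion of $\tilde\lambda$ to a one-parameter family then cuts out a one-dimensional set, and Lemma \ref{cl} classifies each of its components as an arc or a loop, giving the smooth curve $\Gamma_{\tilde x^{(0)}}$ issuing from $(\tilde x^{(0)},e)$.

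The step I expect to cost the most care is the dimension bookkeeping hidden in the word ``curve.'' Because $\tilde\lambda$ ranges over the full cube $(0,1]^n$, a naive count makes $\mathcal H_{\tilde x^{(0)}}^{-1}(0)$ an $n$-dimensional manifold, not a curve; to honestly extract a genuine one-dimensional $\Gamma_{\tilde x^{(0)}}$ one must first freeze all but one degree of freedom of $\tilde\lambda$ — for instance trace the diagonal $\lambda_1=\cdots=\lambda_n=t$ with $t:1\to 0$, or drive the coordinates $\lambda_i$ down to $0$ one index at a time — and then re-verify for the reduced map that $0$ is still a regular value, that its zero set still passes through $(\tilde x^{(0)},e)$, and that the curve can be continued until some component of $\tilde\lambda$ reaches $0$. (Here I read the codomain of $\mathcal H$ as $R^n$, consistent with \ref{hybrid}; the ``$6n$'' in the statement appears to be a typographical slip.) Everything else — the regular-value computation above and the invocations of Lemmas \ref{par}, \ref{inv} and \ref{cl} — is routine once the nonsingular $\tilde x^{(0)}$-block is in hand.
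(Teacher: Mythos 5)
Your proposal follows essentially the same route as the paper: the paper also establishes regularity of $0$ by computing $\det\frac{\partial\mathcal{H}}{\partial\tilde{x}^{(0)}}=(-1)^n\det J_{\mathcal{H}}\tilde{x}^{(0)}\prod_{i=1}^{n}\lambda_i\neq 0$ for $\tilde\lambda\in(0,1]^n$, concludes full row rank of $D\mathcal{H}$, and then appeals (implicitly) to Lemmas \ref{par} and \ref{inv} to obtain the curve through $(\tilde{x}^{(0)},e)$. The dimension-bookkeeping worry you flag is legitimate and is in fact a gap in the paper's own argument, which passes directly from ``$0$ is a regular value'' to ``$\mathcal{H}_{\tilde{x}^{(0)}}^{-1}(0)$ consists of some smooth curves'' even though for $n>1$ the zero set of the map $R^{n}\times(0,1]^n\to R^{n}$ is an $n$-dimensional manifold, so some restriction of $\tilde\lambda$ to a one-parameter family (as you propose) is needed to honestly produce a one-dimensional $\Gamma_{\tilde{x}^{(0)}}$.
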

\begin{proof}
		The Jacobian matrix of the above homotopy function\ref{hybrid} $\mathcal{H}(\tilde{x},\tilde{x}^{(0)},\tilde{\lambda})$ is denoted by $D\mathcal{H}(\tilde{x},\tilde{x}^{(0)},\tilde{\lambda}))$ and we have\\ $D\mathcal{H}(\tilde{x},\tilde{x}^{(0)},\tilde{\lambda}))=$ $\left[\begin{array}{ccc} 
	\frac{\partial{\mathcal{H}(\tilde{x},\tilde{x}^{(0)},\tilde{\lambda})}}{\partial{\tilde{x}}} & 	\frac{\partial{\mathcal{H}(\tilde{x},\tilde{x}^{(0)},\tilde{\lambda})}}{\partial{\tilde{x}^{(0)}}} & \frac{\partial{\mathcal{H}(\tilde{x},\tilde{x}^{(0)},\tilde{\lambda})}}{\partial{\tilde{\lambda}}}\\ 
	\end{array}\right].$\\ For all $\tilde{x}^{(0)} \in \tilde{F}_H$ such that $\psi(\tilde{x}^{(0)}) \neq 0$ and $\tilde{\lambda} \in (0,1]^n,$ we have\\ $\frac{\partial{\mathcal{H}(\tilde{x},\tilde{x}^{(0)},\tilde{\lambda})}}{\partial{\tilde{x}^{(0)}}}=$
	$ \left[\begin{array}{cccc}
	-{\lambda_1}\frac{\partial{\psi_1}(\tilde{x}^{(0)})}{\partial \tilde{x}_1^{(0)}} \ -{\lambda_1}\frac{\partial{\psi_1}(\tilde{x}^{(0)})}{\partial \tilde{x}_2^{(0)}} \ \cdots \ -{\lambda_1}\frac{\partial{\psi_1}(\tilde{x}^{(0)})}{\partial \tilde{x}_n^{(0)}} \\
	-{\lambda_2}\frac{\partial{\psi_2}(\tilde{x}^{(0)})}{\partial \tilde{x}_1^{(0)}} \ -{\lambda_2}\frac{\partial{\psi_2}(\tilde{x}^{(0)})}{\partial \tilde{x}_2^{(0)}} \ \cdots \ -{\lambda_2}\frac{\partial{\psi_2}(\tilde{x}^{(0)})}{\partial \tilde{x}_n^{(0)}}\\
	\vdots \ \vdots \ \vdots \ \vdots \\
	-{\lambda_n}\frac{\partial{\psi_n}(\tilde{x}^{(0)})}{\partial \tilde{x}_1^{(0)}} \ -{\lambda_n}\frac{\partial{\psi_n}(\tilde{x}^{(0)})}{\partial \tilde{x}_2^{(0)}} \ \cdots \ -{\lambda_n}\frac{\partial{\psi_n}(\tilde{x}^{(0)})}{\partial \tilde{x}_n^{(0)}}\end{array}\right].$\\ 
	Now $\det\frac{\partial{\mathcal{H}(\tilde{x},\tilde{x}^{(0)},\tilde{\lambda})}}{\partial{\tilde{x}^{(0)}}}= $
	$(-1)^n\det J_{\mathcal{H}}{\tilde{x}^{(0)}}\prod_{i=1}^{i=n}\lambda_i \neq 0$ for $\lambda \in (0; 1]^n,$ where $J_{\mathcal{H}}{\tilde{x}^{(0)}}=\frac{\partial{\psi}(\tilde{x}^{(0)})}{\partial\tilde{x}^{(0)}}.$  Thus $D\mathcal{H}(\tilde{x},\tilde{x}^{(0)},\tilde{\lambda}))$ is of full  row rank. Therefore, $0$ is a regular value of $\mathcal{H}(\tilde{x},\tilde{x}^{(0)},\tilde{\lambda})$ and  $\mathcal{H}_{\tilde{x}^{(0)}}^{-1}(0)$ consists of some smooth curves such that $\mathcal{H}_{\tilde{x}^{(0)}}(\tilde{x}^{(0)},1)=0.$ Hence there must be a smooth curve  $\Gamma_{\tilde{x}^{(0)}}$ starting from  $(\tilde{x}^{(0)},1).$
  \end{proof}

\begin{theorem}
Assume that the function $f(\tilde{x})$ is either increasing or bounded function.  The smooth curve $\Gamma_{\tilde{x}^{(0)}}$  is a bounded curve. 
\end{theorem}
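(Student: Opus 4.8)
\NI\emph{Strategy and reduction.} The plan is to argue by contradiction. Recall from Theorem \ref{reg} and Lemma \ref{cl} that $\Gamma_{\tilde x^{(0)}}$ is a smooth curve contained in $\tilde F_{\mathcal H}=R^n\times(0,1]^n$. Suppose it is unbounded; then there are $(\tilde x^{(k)},\tilde\lambda^{(k)})\in\Gamma_{\tilde x^{(0)}}$ with $\|(\tilde x^{(k)},\tilde\lambda^{(k)})\|\to\infty$, and since each $\tilde\lambda^{(k)}\in(0,1]^n$ is bounded, necessarily $\|\tilde x^{(k)}\|\to\infty$. On $\Gamma_{\tilde x^{(0)}}$ the homotopy equation \ref{hybrid} holds, i.e. $\psi_i(\tilde x)=\lambda_i\psi_i(\tilde x^{(0)})$ with $0<\lambda_i\le1$, so $|\psi_i(\tilde x^{(k)})|\le|\psi_i(\tilde x^{(0)})|=:c_i$ for all $i$ and all $k$. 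I will contradict this by showing that, under either hypothesis on $f$, $\|\tilde x^{(k)}\|\to\infty$ forces some $\psi_i(\tilde x^{(k)})$ out of $[-c_i,c_i]$.

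\NI\emph{Power counting.} After passing to a subsequence, assume that for every $i$ the signs of $\tilde x_i^{(k)}$ and of $f_i(\tilde x^{(k)})$ are eventually constant. With $\phi(t)=t^3$,
\[
\psi_i(\tilde x)=\bigl(f_i(\tilde x)-\tilde x_i\bigr)^6-\mathrm{sgn}\bigl(f_i(\tilde x)\bigr)\,f_i(\tilde x)^6-\mathrm{sgn}(\tilde x_i)\,\tilde x_i^6 .
\]
Take $i$ with $|\tilde x_i^{(k)}|\to\infty$. If $\tilde x_i^{(k)}<0$ eventually, then, whatever the sign of $f_i(\tilde x^{(k)})$, a direct estimate gives $\psi_i(\tilde x^{(k)})\ge|\tilde x_i^{(k)}|^6\to\infty$, contradicting $|\psi_i|\le c_i$; hence every ``blow-up coordinate'' has $\tilde x_i^{(k)}>0$. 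For such $i$: when $f_i(\tilde x^{(k)})\ge0$, the identity $(f_i-\tilde x_i)^6-f_i^6-\tilde x_i^6=-\tilde x_i f_i\,Q(\tilde x_i,f_i)$ with $Q(u,v)=6u^4-15u^3v+20u^2v^2-15uv^3+6v^4$ (which is bounded below by a positive multiple of $u^4$ for $u,v\ge0$) gives $|\psi_i(\tilde x^{(k)})|\ge c\,\bigl(\tilde x_i^{(k)}\bigr)^5 f_i(\tilde x^{(k)})$; when $f_i(\tilde x^{(k)})<0$, one gets $\psi_i(\tilde x^{(k)})\ge 6\,\bigl(\tilde x_i^{(k)}\bigr)^5|f_i(\tilde x^{(k)})|$. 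In either case boundedness of $\psi_i$ on the sequence forces $f_i(\tilde x^{(k)})\to0$. By symmetry (the two arguments of $\psi_i$ enter interchangeably), if $f_i(\tilde x^{(k)})\to\pm\infty$ while $\tilde x_i^{(k)}$ stays bounded, then $f_i(\tilde x^{(k)})\to+\infty$ and $\tilde x_i^{(k)}\to0$.

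\NI\emph{Closing with the hypothesis on $f$.} If $f$ is bounded, then no $f_i(\tilde x^{(k)})$ diverges, so the only remaining obstruction is a coordinate with $\tilde x_i^{(k)}\to+\infty$ and $f_i(\tilde x^{(k)})\to0$; here I would combine boundedness with continuity of $f$ to control the surviving term $-6\,\bigl(\tilde x_i^{(k)}\bigr)^5 f_i(\tilde x^{(k)})$ and show that $\psi_i(\tilde x^{(k)})$ cannot remain in $[-c_i,c_i]$. If $f$ is increasing, then $f_i(\tilde x^{(k)})\ge0$ cannot decrease to $0$ as the $i$-th argument grows to $+\infty$ with the others bounded, so $\bigl(\tilde x_i^{(k)}\bigr)^5 f_i(\tilde x^{(k)})\to\infty$, again contradicting $|\psi_i|\le c_i$; the dual obstruction ($\tilde x_i^{(k)}\to0$, $f_i(\tilde x^{(k)})\to+\infty$) is excluded the same way. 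In every case the assumption that $\Gamma_{\tilde x^{(0)}}$ is unbounded collapses.

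\NI\emph{Where the difficulty lies.} The first two paragraphs are bookkeeping; the real obstacle is the single surviving sign pattern, $f_i(\tilde x^{(k)})\,\tilde x_i^{(k)}\ge0$ with $\tilde x_i^{(k)}\to+\infty$, in which the leading $\tilde x_i^6$ cancels and one is left controlling $-6\,\bigl(\tilde x_i^{(k)}\bigr)^5 f_i(\tilde x^{(k)})$ (and its dual). This is exactly where the hypothesis on $f$ is indispensable: for an increasing $f$, monotonicity keeps $f_i$ bounded away from $0$ along the coordinate ray; for a bounded $f$, one must use boundedness together with continuity of $f$ (and with the complementarity structure built into $\psi$, which already prevents $(f_i(\tilde x^{(0)}),\tilde x_i^{(0)})$ from being a complementary pair) to stop $f_i$ from decaying fast enough to keep $\psi_i$ inside $[-c_i,c_i]$. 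I expect this last point to require the most care, and in the ``bounded'' case possibly a mild strengthening of the stated hypothesis.
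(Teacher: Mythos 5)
Your overall strategy coincides with the paper's: argue by contradiction from the boundedness of $\psi_i(\tilde x^{(k)})=\lambda_i^{(k)}\psi_i(\tilde x^{(0)})$ along the path, expand $\psi_i$ as a degree-six polynomial in $\tilde x_i$ and $f_i$, and extract a divergent term. Your bookkeeping is in fact tidier than the paper's: the direct bound $\psi_i(\tilde x^{(k)})\ge(\tilde x_i^{(k)})^6$ when $\tilde x_i^{(k)}\to-\infty$ disposes of the paper's Case 1 in one line, without its ratio limits $\psi_i/(\tilde x_i)^6\to 2$ and without the tacit assumption that $f_i(\tilde x^{(k)})/\tilde x_i^{(k)}$ converges to some $p$ (which the paper needs in order to invoke the root-free sextic $2p^6-6p^5+15p^4-20p^3+15p^2-6p+2$). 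Your factorization $(f_i-\tilde x_i)^6-f_i^6-\tilde x_i^6=-\tilde x_i f_i\,Q(\tilde x_i,f_i)$ with $Q$ bounded below by a positive multiple of $\tilde x_i^4$ on the positive quadrant is correct and parallels the paper's observation that for $\tilde x_j>0$, $f_j(\tilde x)\ge 0$ the equation $\psi_j(\tilde x)=k_j>0$ is impossible.

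However, the proof is not complete, and you say so yourself: the one sign pattern that survives your power counting ($\tilde x_i^{(k)}\to+\infty$, which forces $f_i(\tilde x^{(k)})\to 0$) is exactly the step you leave as ``I would combine boundedness with continuity\ldots''. This is a genuine gap, not a routine verification. If $f$ is merely bounded and continuous with, say, $f_i(\tilde x)\sim(\tilde x_i)^{-5}$ along the sequence, then $(\tilde x_i^{(k)})^5 f_i(\tilde x^{(k)})$ stays bounded and $\psi_i(\tilde x^{(k)})$ never leaves $[-c_i,c_i]$, so continuity alone cannot rescue the bounded case. Likewise, in the increasing case your assertion that $f_i\ge 0$ ``cannot decrease to $0$'' does not cover $f_i$ increasing to $0$ from below (e.g.\ $f_i=-e^{-\tilde x_i}$), for which $6(\tilde x_i^{(k)})^5|f_i(\tilde x^{(k)})|\to 0$ and again no contradiction arises. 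To be fair, the paper's own proof stalls at the same point: its Case 2 simply writes $\psi_j(\tilde x^{(k)})/(\tilde x_j^{(k)})^5\to-6f_j(\tilde x^{(k)})\nrightarrow 0$ without justifying that $f_j$ stays away from zero, and the additional asymptotic hypotheses needed to close the argument appear only as fragments outside the theorem statement. So your proposal reproduces the paper's route with a cleaner Case 1, but it inherits (and honestly flags) the unproved decay-rate step on which both arguments actually turn; as written, neither establishes the theorem under the stated hypotheses.
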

\begin{proof}
	 From the homotopy function \ref{hybrid} we get $\psi_i(\tilde{x})={\lambda_i}\psi_i(\tilde{x}^{(0)}) \  \forall i \in \{1,2,\cdots,n \},$where $\psi_i(\tilde{x})=	\phi((f_i(\tilde{x})-\tilde{x}_i)^2) - \phi(f_i(\tilde{x}) |f_i(\tilde{x})|) - \phi(\tilde{x}_i|\tilde{x}_i|).$ It is clear that $\lambda_i \in [0,1],$ $\psi_i(\tilde{x})$ is finite. So $\|\psi(\tilde{x})\|<\infty.$ \\This implies that $\psi_i(\tilde{x})=	\phi((f_i(\tilde{x})-\tilde{x}_i)^2) - \phi(f_i(\tilde{x}) |f_i(\tilde{x})|) - \phi(\tilde{x}_i|\tilde{x}_i|)={\lambda_i}\psi_i(\tilde{x}^{(0)}) \  \forall i \in \{1,2,\cdots,n \}.$ Let ${\lambda_i}\psi_i(\tilde{x}^{(0)}) =k_i.$ For $\lambda_i \in (0,1], k_i>0.$\\ Hence $\phi((f_i(\tilde{x})-\tilde{x}_i)^2) = \phi(f_i(\tilde{x}) |f_i(\tilde{x})|) + \phi(\tilde{x}_i|\tilde{x}_i|)+k_i.$ \\ Again $\forall i \in \{1,2, \cdots, n\}$ we have  $\psi_i(\tilde{x})=((f_i(\tilde{x})-\tilde{x}_i)^6)-(f_i(\tilde{x}) |f_i(\tilde{x})|)^3-(\tilde{x}_i|\tilde{x}_i|)^3=(f_i(\tilde{x}))^6-6(f_i(\tilde{x}))^5\tilde{x}_i+15(f_i(\tilde{x}))^4(\tilde{x}_i)^2-20(f_i(\tilde{x}))^3(\tilde{x}_i)^3+15(f_i(\tilde{x}))^2(\tilde{x}_i)^4-6(f_i(\tilde{x}))(\tilde{x}_i)^5+(\tilde{x}_i)^6-(f_i(\tilde{x}))^5|f_i(\tilde{x})|-(\tilde{x}_i)^5|(\tilde{x}_i)|.$ Consider the homotopy path $\Gamma_{\tilde{x}^{(0)}}$  is unbounded. Then there exists a sequence of points $(\tilde{x}^{(k)}, \tilde{\lambda}^{(k)}) \subset\Gamma_{\tilde{x}^{(0)}}$ such that $\|\tilde{x}^{(k)}\| \to \infty.$  Then there are two possibilities. \\
	 \vsp
	\NI	\textbf{Case 1:} Let $\|\tilde{x}^{(k)}\| \to \infty.$ Then $\exists \ i \in \{1,2,\cdots,n\}$ such that $\tilde{x}^{(k)}_i \to -\infty$ as $k \to \infty.$ Let set $I_{1\tilde{x}}=\{i \in \{1,2,\cdots,n\}:\lim\limits_{k \to \infty}\tilde{x}^{(k)}_i \to -\infty\}.$ \\
		\smallskip
		Now consider  $\|f(\tilde{x}^{(k)})\|< \infty.$  Then from above we have $\forall i \in I_{1\tilde{x}},$ as $k \to \infty,$ $\frac{\psi_i(\tilde{x}^{(k)})}{(\tilde{x}^{(k)} _i)^6} \to 2,$ which contradicts that $\psi(x)$ is bounded.\\
		\smallskip
		Again consider that the nonlinear function $f(\tilde{x})$ is unbounded. It is noted that $f(\tilde{x})$ is an increasing function. \\Let $L_{1f(\tilde{x})}=\{l \in \{1,2,\cdots,n\}:\lim\limits_{k \to \infty}f_l (\tilde{x}^{(k)}) \to -\infty\}$ and consider $\lim\limits_{k\to \infty}\frac{f_i(\tilde{x}^{(k)})}{(\tilde{x}_i ^{(k)})} =p \ \forall \ i \in I_{1\tilde{x}} \cap L_{1f(\tilde{x})} $. 
		Therefore $ \forall \ i \in I_{1\tilde{x}} \cap L_{1f(\tilde{x})},$ as $k \to \infty,$ $ \frac{\psi_i(\tilde{x}^{(k)})}{(\tilde{x}^{(k)} _i)^6} \to 2p^6 -6p^5+15p^4-20p^3+15p^2-6p+2.$ From the boundedness of $\psi(x^{(k)})$, it is clear that $2p^6 -6p^5+15p^4-20p^3+15p^2-6p+2=0,$  has no real solution, which is a contradiction.\\
		Now for all $l \in L_{1f(\tilde{x})}, l \notin I_{1\tilde{x}},$ $\lim\limits_{k \to \infty}\frac{\psi_l(\tilde{x}^{(k)})}{f_l(\tilde{x}^{(k)})^6} \to 2,$ contradicts the boundedness of the function $\psi(\tilde{x}).$ \\

		\NI	\textbf{Case 2:}  Let $\|\tilde{x}^{(k)}\| \to \infty.$ Then $\exists \ j \in \{1,2,\cdots,n\}$ such that $\tilde{x}^{(k)}_j \to \infty$ as $k \to \infty.$ Let set $I_{2\tilde{x}}=\{j \in \{1,2,\cdots,n\}:\lim\limits_{k \to \infty}\tilde{x}^{(k)}_j \to \infty\}$.\\
			Let $\tilde{x}_j > 0, {f_j}(\tilde{x}) \geq 0.$ Consider that $\lambda_j \in(0,1].$Then $\phi((f_j(\tilde{x})-\tilde{x}_j)^2) = \phi(f_j(\tilde{x}) |f_j(\tilde{x})|) + \phi(\tilde{x}_j|\tilde{x}_j|)+k_j=\phi((f_j(\tilde{x})^2) + \phi((\tilde{x}_j)^2)+k_j,$ where $k_j > 0.$ Now $0 \leq \phi((f_j(\tilde{x})-\tilde{x}_j)^2)\leq\phi((f_j(\tilde{x})^2)+\phi((\tilde{x}_j)^2).$ This contradicts that $\phi((f_j(\tilde{x})-\tilde{x}_j)^2)=\phi((f_j(\tilde{x})^2) + \phi((\tilde{x}_j)^2)+k_j.$ Therefore if $\psi_j(\tilde{x})={\lambda_j}\psi_j(\tilde{x}^{(0)}), $ $\lambda_j \in(0,1]$ has the solution $\tilde{x}_j>0,$ then $f_j(\tilde{x}) < 0.$\\
		Now consider that $\|f(\tilde{x}^{(k)})\|<\infty.$\\   Then $\forall j \in I_{2\tilde{x}},$ as $k \to \infty,$ $\frac{\psi_j(\tilde{x}^{(k)})}{(\tilde{x}^{(k)} _j)^5} \to -6(f_j(\tilde{x}^{(k)})) \nrightarrow 0, $ which is a contradiction.\\
Therefore considering both the cases we conclude that the homotopy path $\Gamma_{\tilde{x}^{(0)}}$ is bounded for $\lambda_i \in (0,1] \ \forall i$.
		\end{proof}
 Note that here we use the vector $\tilde{\lambda},$ such that $\tilde{\lambda}:[0,\infty]^n \to (0,1]^n$ with $ \lambda_i=\exp(-t_i).$

\begin{remk}
	Now we trace the  homotopy path $\Gamma_{\tilde{x}^{(0)}} \subset \mathcal{H}_{\tilde{x}^{(0)}}^{-1}(0) \subset \tilde{F}_{\mathcal{H}}$ from the initial point $(\tilde{x}^{(0)},e)$ until $\tilde{\lambda} \to \tilde{0}$ and find the solution of the system of nonlinear equation \ref{ma} under some assumptions. If the  homotopy path is bounded and $\tilde{\lambda}$ goes to $0 $ starting from $e,$ the component $\tilde{x}$ give the solution of \ref{ma}.  Let $s$ denote the arc length of $\Gamma_{\tilde{x}^{(0)}}$, we can parameterize the  homotopy path $\Gamma_{\tilde{x}^{(0)}}$ with respect to $s$ in the following form\\
	\begin{equation}\label{sss}
	\mathcal{H}_{\tilde{x}^{(0)}} (\tilde{x}(s),\tilde{\lambda} (s))=0, \ 
	\tilde{x}(0)=\tilde{x}^{(0)}, \   \tilde{\lambda}(0)=e.
	\end{equation} 
	Now differentiating \ref{sss} with respect to $s$, we obtain the following system of ordinary differential equations with given initial values
	\begin{equation}\label{ivv}
	\mathcal{H}'_{\tilde{x}^{(0)}} (\tilde{x}(s),\tilde{\lambda} (s))\left[\begin{array}{c} 
	\frac{d\tilde{x}}{ds}\\
	\frac{d\tilde{\lambda}}{ds}\\
	\end{array}\right]=0, \
	\|( \frac{d\tilde{x}}{ds},\frac{d\tilde{\lambda}}{ds})\|_2=1, \ 
	\tilde{x}(0)=\tilde{x}^{(0)}, \   \tilde{\lambda}(0)=e. \ 
	\end{equation} 
	and the $\tilde{x}$-component of $(\tilde{x}(\bar{s}),\tilde{\lambda} (\bar{s}))$ gives the solution of the complementarity problem for $\tilde{\lambda}({\bar{s}})=0.$
\end{remk}

\begin{theorem}
	The homotopy path  $\Gamma_{\tilde{x}^{(0)}}$ is determined by the initial value problem \ref{ivv}.
\end{theorem}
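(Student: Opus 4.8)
The plan is to derive the differential equation \ref{ivv} by differentiating the homotopy identity along the curve, and then to invoke the standard existence and uniqueness theory for ordinary differential equations to conclude that \ref{ivv} pins down $\Gamma_{\tilde{x}^{(0)}}$ uniquely. First I would use Theorem \ref{reg}: there $0$ is shown to be a regular value of $\mathcal{H}_{\tilde{x}^{(0)}}$, so its Jacobian $\mathcal{H}'_{\tilde{x}^{(0)}}$ has maximal row rank at every point of $\mathcal{H}_{\tilde{x}^{(0)}}^{-1}(0)$, and by the inverse image theorem (Lemma \ref{inv}) together with the classification theorem of one-dimensional smooth manifolds (Lemma \ref{cl}) the component $\Gamma_{\tilde{x}^{(0)}}$ of the zero set through $(\tilde{x}^{(0)},e)$ is a smooth curve that admits a regular $C^{\omega}$ parameterization $s \mapsto (\tilde{x}(s),\tilde{\lambda}(s))$ by arc length with $(\tilde{x}(0),\tilde{\lambda}(0)) = (\tilde{x}^{(0)},e)$. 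This is exactly the set-up of \ref{sss}.

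Next I would differentiate the identity $\mathcal{H}_{\tilde{x}^{(0)}}(\tilde{x}(s),\tilde{\lambda}(s)) \equiv 0$ with respect to $s$, obtaining $\mathcal{H}'_{\tilde{x}^{(0)}}(\tilde{x}(s),\tilde{\lambda}(s))\,(\dot{\tilde{x}}(s),\dot{\tilde{\lambda}}(s))^{t} = 0$; the arc-length normalization $\|(\dot{\tilde{x}}(s),\dot{\tilde{\lambda}}(s))\|_2 = 1$ is automatic, and evaluation at $s=0$ supplies the initial data, so the arc-length parameterization of $\Gamma_{\tilde{x}^{(0)}}$ solves \ref{ivv}. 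For uniqueness, maximal rank of $\mathcal{H}'_{\tilde{x}^{(0)}}$ along the zero set makes its kernel one-dimensional there, so the two conditions in \ref{ivv} determine the velocity $(\dot{\tilde{x}},\dot{\tilde{\lambda}})$ up to sign; fixing the orientation at $(\tilde{x}^{(0)},e)$ and using that $\mathcal{H}$ is real-analytic (hence the normalized tangent direction is a locally Lipschitz field along the curve), the Picard--Lindel\"{o}f theorem yields a unique maximal solution of \ref{ivv}. Since $\frac{d}{ds}\mathcal{H}_{\tilde{x}^{(0)}}(\tilde{x}(s),\tilde{\lambda}(s)) = 0$ along that solution and it starts on the zero set, it remains on the zero set and therefore coincides with $\Gamma_{\tilde{x}^{(0)}}$.

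I expect the main obstacle to be making the direction field of \ref{ivv} genuinely well defined and smooth along the whole curve rather than any particular computation: one must check that the one-dimensional kernel of $\mathcal{H}'_{\tilde{x}^{(0)}}$ can be oriented continuously --- equivalently, that a suitable augmented determinant built from $\mathcal{H}'_{\tilde{x}^{(0)}}$ never vanishes on $\Gamma_{\tilde{x}^{(0)}}$ --- so that the right-hand side of the ordinary differential equation is single-valued and locally Lipschitz, which is precisely what licenses the uniqueness half of the argument. Once Theorem \ref{reg} and Lemmas \ref{inv} and \ref{cl} are invoked, the remaining work (differentiating the homotopy identity and matching initial conditions) is routine.
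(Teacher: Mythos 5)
Your proposal follows essentially the same route as the paper: differentiate the identity $\mathcal{H}_{\tilde{x}^{(0)}}(\tilde{x}(s),\tilde{\lambda}(s))=0$ with respect to arc length, append the normalization $\|(\dot{\tilde{x}},\dot{\tilde{\lambda}})\|_2=1$ and the initial data at $(\tilde{x}^{(0)},e)$, and observe that the resulting Cauchy problem reproduces $\Gamma_{\tilde{x}^{(0)}}$. The paper stops at asserting that the first two equations of the system are "solvable on $\mu$," whereas you additionally justify why the tangent direction is well defined (one-dimensional kernel from regularity, orientation fixed at the start point) and why the IVP solution is unique and stays on the zero set; these are refinements of, not departures from, the paper's argument.
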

\begin{proof}
	Let $s$ denote the arc length of the homotopy path $\Gamma_{\tilde{x}^{(0)}}.$ Now differentiating the homotopy equation \ref{sss}, we get $\frac{\partial\mathcal{H}}{\partial \tilde{x}}\frac{d\tilde{x}}{ds}+\frac{\partial\mathcal{H}}{\partial \tilde{\lambda}}\frac{d\tilde{\lambda}}{ds}=0.$ Let $\nu=\left[\begin{array}{c} 
	\tilde{x}\\
	\tilde{\lambda}\\
	\end{array}\right].$ As $s$ is the arc length of $\Gamma_{\tilde{x}^{(0)}},$ then $	\|\nu'\|=\|(\frac{d\tilde{x}}{ds},\frac{d\tilde{\lambda}}{ds})\|_2=1.$ Then we get the following system of ordinary differential equation.
	\begin{equation}\label{ivvvv}
	\left[\begin{array}{cc} 
\frac{\partial\mathcal{H}}{\partial \tilde{x}} & 
	\frac{\partial\mathcal{H}}{\partial \tilde{\lambda}}\\
	\end{array}\right]\mu=0, \mu^t\mu=1, \frac{d\nu}{ds}=\mu, \nu(0)=\left[\begin{array}{c}
	\tilde{x}^{(0)}\\
	e\\
	\end{array}\right]
	\end{equation}
	Hence from the system \ref{ivvvv} the first two equations are solvable on $\mu.$ Solving the following cauchy problem, the homotopy curve $\Gamma_{\tilde{x}^{(0)}}$ can be derived.
	\begin{equation}
	\frac{d\nu}{ds}=\mu, \nu(0)=\left[\begin{array}{c}
	\tilde{x}^{(0)}\\
	e\\
	\end{array}\right].
	\end{equation}  
 \end{proof}
\newpage
\subsection{Algorithm}\label{homhybrid}
\begin{algorithm}
\caption{ \ \textbf{\textit{ Homotopy Continuation Method}}}\label{allgo}
\NI \textbf{Step 0:} Set $i=i_c=0.$ [$i$ is the Number of Iteration(s) and $i_c$ is the Number of shifting of the Initial Point(s).] Give an initial point $(\tilde{x}^{(0)},\tilde{\lambda}_0) \in \tilde{F}_{\mathcal{H}} \times \{1\}^n.$ Set $\eta_1=10^{-12},c_0=50.$ 

$\kappa_1=\sqrt{2}, \kappa_2=9000, $ where the step-length is determined by $\kappa_1^k, k \in Z$ and the limit of the maximum step-length is maintained by $\kappa_1^k \leq \kappa_2.$

Set $\tilde{u}_1=e,$ $\tilde{p}_1=\tilde{\lambda}^{-1}(e)$ and $\tilde{p}_0=\tilde{\lambda}^{-1}(\tilde{0}),$ where $e=\left[\begin{array}{c} 
1\\
1\\
\vdots\\
1\\
\end{array}\right],$ vector of all $1'$s and $\tilde{0}=\left[\begin{array}{c} 
0\\
0\\
\vdots\\
0\\
\end{array}\right],$ vector of all $0'$s.

Set $\epsilon_1=10^{-16}e, \epsilon_2=10^{-10}e.$ These are real numbers, used as thresholds for $\tilde{\lambda}.$  If $\tilde{\lambda}$ achieves a value $0\leq \tilde{\lambda} \leq \epsilon_1,$ then the algorithm stops with an acceptable solution. If $\tilde{\lambda}$ achieves a value, such that, $\epsilon_1< \tilde{\lambda} \leq \epsilon_2,$ the algorithm stops, declaring that point as probable solution.

\NI \textbf{Step 1:}  Set $\left[\begin{array}{c} 
\tilde{x}\\
\tilde{t}\\
\end{array}\right]=\left[\begin{array}{c} 
\tilde{x}^{(0)}\\
\tilde{p_1}\\
\end{array}\right].$ Now calculate the constant $d^{(0)}=\det  (\frac{\partial{\mathcal{H}}}{\partial{\tilde{x}}}(\tilde{x}^{(0)},\tilde{\lambda}_0)) $ and $s^{(0)}=\frac{d{\tilde{\lambda}}}{dt}(t=p_1).$ If $|d^{(0)}|\leq \epsilon$ or $|s^{(0)}|\leq \epsilon,$  then stop else go to  step $2$ [$\epsilon \to 0,$ a threshold].

\NI\textbf{Step 2:} Set $c_1=c_2=0.$  Calculate the constant $d=\det (\frac{\partial{\mathcal{H}}}{\partial{\tilde{x}}}(\tilde{x},\tilde{\lambda})), N=\frac{d{\tilde{\lambda}(t)}}{dt},$ and $s=Ne.$ If $|d|\leq \epsilon,$ then stop else go to  step $3$.
\end{algorithm}

\begin{algorithm}
\NI\textbf{Step 3:} Determine the unit predictor direction $\tau^{(n)}$ by the following method:
If sign$(d)=-\text{sign}(d_0),$ then set $\tilde{t}_d=N^{-1}(e-\tilde{\lambda}(t)),$ else set $\tilde{t}_d=-N^{-1}\tilde{\lambda}(t).$ Calculate $\tilde{w}_d=-(\frac{\partial{\mathcal{H}}}{\partial{\tilde{x}}}(\tilde{x},\tilde{\lambda}))^{-1}(\frac{\partial{\mathcal{H}}}{\partial{\tilde{\lambda}}}(\tilde{x},\tilde{\lambda}))\tilde{t}_d,$ $\tau^{(n)}=$ $\left[\begin{array}{c} 
\tilde{x}_n\\
\tilde{t}_n\\
\end{array}\right]=\frac{1}{\|\tilde{x}_d,\tilde{t}_d\|}\left[\begin{array}{c} 
\tilde{x}_d\\
\tilde{t}_d\\
\end{array}\right],$ $\tilde{\tau}=\dfrac{\|\tilde{t}_d\|}{\|\tilde{x}_d,\tilde{t}_d\|},$ where $\|\tilde{x}_d,\tilde{t}_d\|=\sqrt{\tilde{x}_d^2+\tilde{t}_d^2}.$
If $\tilde{\tau} \leq \eta_1,$ then set $c_1=c_1+1$ else reset $c_1=0.$
If $c_1<c_0,$ then go to step $4$ else, if $\tilde{u}_1\leq \epsilon_2$ then, stop with a probable solution else, stop due to non-convergence.\\

\NI \textbf{Step 4:} Choosing step length: Set $ k=0; \gamma=[\nabla\mu(\tilde{x})]^t\tilde{x}_n,$ where $\mu: R^n \to R,$   is used to increase step length in the descent direction(s). Set
this Function, such that, $\mu(\bar{\tilde{x}})\leq \mu(\tilde{x}),$ and $\tilde{x}, \bar{\tilde{x}} \in \mathcal{F}_{\mathcal{H}},$ where $\bar{\tilde{x}}$ is the solution of the problem. $\mu(\tilde{x})$ is taken as $[\mathcal{H}_0(\tilde{x})]^t[\mathcal{H}_0(\tilde{x})],$ where $\mathcal{H}_0(\tilde{x})=\left[\begin{array}{c} 
\psi_1(\tilde{x})\\
\psi_2(\tilde{x})\\
\vdots\\
\psi_n(\tilde{x})
\end{array}\right].$ 

If $\gamma \geq 0, $ $\tilde{x}+\kappa_1^{k+1}\tilde{x}_n \in \mathcal{F}_{\mathcal{H}}, \min(\tilde{p}_0,\tilde{p}_1)<\tilde{t}+\kappa_1^{k+1}\tilde{t}_n<\max(\tilde{p}_0, \tilde{p}_1), $ then set $k=k+1$ and go to step $5.$

else if $\gamma< 0, \mu(\tilde{x}+\kappa_1^{k+1}\tilde{x}_n)<\mu(\tilde{x}+\kappa_1^{k}\tilde{x}_n), \tilde{x}+\kappa_1^{k+1}\tilde{x}_n \in \mathcal{F}_{\mathcal{H}},\min(\tilde{p}_0,\tilde{p}_1)<\tilde{t}+\kappa_1^{k+1}\tilde{t}_n<\max(\tilde{p}_0, \tilde{p}_1), $ then set $k=k+1,$ and go to step $5.$

else reset $c_2=0,$ and jump to step $6.$

\NI \textbf{Step 5:} If $\kappa_1^k>\kappa_2,$ then set $k=k-1,$ $c_2=c_2+1$ and go to step $6$, else go to step $4$.

\NI\textbf{Step 6:} If $c_2<c_0,$ then go to step $7$, else if $\tilde{u}_1\leq \epsilon_2,$ then stop with probable solution else stop.

\NI\textbf{Step 7:}  Compute the predictor and corrector point:
$\left[\begin{array}{c} 
\tilde{x}_p\\
\tilde{t}_p\\
\end{array}\right]= \left[\begin{array}{c} 
\tilde{x}\\
\tilde{t}\\
\end{array}\right]+\kappa_1^k\left[\begin{array}{c} 
\tilde{x}_n\\
\tilde{t}_n\\
\end{array}\right],$  $\left[\begin{array}{c} 
\tilde{x}_c\\
\tilde{t}_c\\
\end{array}\right]= \left[\begin{array}{c} 
\tilde{x}_p\\
\tilde{t}_p\\
\end{array}\right]-[J_{\mathcal{H}}(\tilde{x}_p,\tilde{t}_p)^{+}H(\tilde{x}_p,\tilde{t}_p)],$ where $[J_{\mathcal{H}}(\tilde{x}_p,\tilde{t}_p)^{+}]$ is the Moore-Penrose Inverse. If $\det(\text{diag}(\mathcal{H}(\tilde{x}_c,\tilde{p}_0)-\mathcal{H}(\tilde{x}_c,\tilde{p}_1))) \leq \epsilon,$ where $\epsilon \to 0,\text{a threshold},$ then stop else $\tilde{u}_s=(\text{diag}(\mathcal{H}(\tilde{x}_c,\tilde{p}_0)-\mathcal{H}(\tilde{x}_c,\tilde{p}_1))^{-1}\mathcal{H}(\tilde{x}_c,\tilde{p}_0).$ Now if $(\tilde{x}_c,\tilde{u}_s) \in \tilde{\mathcal{F}}_{\mathcal{H}},$ jump to step $10,$ else set $k=k-1$ and go to step $8.$ 
\end{algorithm}
\begin{algorithm}

\NI\textbf{Step 8:} Calculate $a = \text{min}(\kappa_1^k,\|\tilde{x}-\tilde{x}_c\|).$ If $a\leq \eta_2,$ then go to step $9$ else jump back to step $5.$

\NI\textbf{Step 9:} If $\tilde{u}_1 \leq \epsilon_2,$ then stop with a Probable Solution else, set $ i_c = i_c + 1$ and
jump back to Step $1$, after changing the Initial Point as, $\tilde{x}^{(0)} = \tilde{x}_c.$

\NI\textbf{Step 10:} Set $\tilde{t}_s=\tilde{\lambda}^{-1}(\tilde{u}_s),$ $\left[\begin{array}{c} 
\tilde{x} \\
\tilde{t} \\
\end{array}\right]=$
$\left[\begin{array}{c} 
\tilde{x}_c\\
\tilde{t}_s\\
\end{array}\right], \tilde{u}_1=\frac{\|\tilde{u}_s\|}{\sqrt{n}}$ 
If $\tilde{u}_1\leq \epsilon_1,$ then stop with acceptable  homotopy solution else set $i=i+1$ and go to step $2.$\\
\end{algorithm}
\bigskip
\newpage
\begin{theorem}
	If the  homotopy curve $\Gamma_{\tilde{x}}^{(0)}$ is smooth, then the positive predictor direction $\tilde{\tau}^{(0)}$ at the initial point $\tilde{x}^{(0)}$ satisfies sign($\det \left[\begin{array}{c}
	\frac{\partial \mathcal{H}}{\partial \tilde{x} \partial \tilde{\lambda}}(\tilde{x}^{(0)},1)\\
	e\tilde{\tau}^{(0)^t}\\
	\end{array}\right]$)$=(-1)^n\text{sign}\det J_{\mathcal{H}}{\tilde{x}^{(0)}},$ where $J_{\mathcal{H}}{\tilde{x}^{(0)}}=\frac{\partial{\psi}(\tilde{x}^{(0)})}{\partial\tilde{x}^{(0)}}.$ 
\end{theorem}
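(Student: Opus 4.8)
The plan is to recognise the positive predictor direction $\tilde{\tau}^{(0)}$ as the oriented unit tangent of the smooth curve $\Gamma_{\tilde{x}^{(0)}}$ at its starting point $(\tilde{x}^{(0)},e)$, and then to run the classical bordered--Jacobian sign computation of homotopy continuation. By Theorem \ref{reg}, for almost every $\tilde{x}^{(0)}$ the point $(\tilde{x}^{(0)},e)$ is a regular zero of $\mathcal{H}_{\tilde{x}^{(0)}}$, so $\Gamma_{\tilde{x}^{(0)}}$ is a smooth curve through it and its tangent line there is the kernel of the derivative of $\mathcal{H}_{\tilde{x}^{(0)}}$; the positive predictor direction is, by definition, the unit vector in that line whose $\tilde{\lambda}$-block points into $(0,1)^n$, i.e.\ the one along which each $\lambda_i$ decreases from $1$ --- equivalently, after the substitution $\lambda_i=\exp(-t_i)$ of the preceding Remark, the one along which the arc length and each $t_i$ increase from $0$. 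This orientation choice is exactly what turns the statement into an equality of \emph{signs} rather than of numbers.

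At the starting point the blocks of the full derivative $\frac{\partial\mathcal{H}}{\partial\tilde{x}\partial\tilde{\lambda}}(\tilde{x}^{(0)},1)=[\,\frac{\partial\mathcal{H}}{\partial\tilde{x}}\ |\ \frac{\partial\mathcal{H}}{\partial\tilde{\lambda}}\,]$ (with $1$ meaning $\tilde{\lambda}=e$) are the ones already written down in the proof of Theorem \ref{reg}: $\frac{\partial\mathcal{H}}{\partial\tilde{x}}(\tilde{x}^{(0)},e)=J_{\mathcal{H}}{\tilde{x}^{(0)}}$ and $\frac{\partial\mathcal{H}}{\partial\tilde{\lambda}}(\tilde{x}^{(0)},e)=-\,\mathrm{diag}(\psi_1(\tilde{x}^{(0)}),\dots,\psi_n(\tilde{x}^{(0)}))$, the second nonsingular because $\psi_i(\tilde{x}^{(0)})\neq 0$ for every $i$. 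I would then write $\tilde{\tau}^{(0)}$ by Cramer's rule as a normalized vector of alternating maximal minors of this derivative --- such a vector is orthogonal to every row and hence spans the tangent line of $\Gamma_{\tilde{x}^{(0)}}$ --- and plug it into the bordered matrix of the statement. Expanding the determinant and inserting the Cramer representation of $\tilde{\tau}^{(0)}$, every cofactor reproduces one of those minors, so the determinant collapses to a nonzero constant times a sum of squares of minors; its sign is therefore the sign of that constant, which is precisely what the orientation of $\tilde{\tau}^{(0)}$ pins down.

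It remains to evaluate the sign of that constant. Solving $\frac{\partial\mathcal{H}}{\partial\tilde{x}}(\tilde{x}^{(0)},e)\,\tilde{x}_{\tau}+\frac{\partial\mathcal{H}}{\partial\tilde{\lambda}}(\tilde{x}^{(0)},e)\,\tilde{\lambda}_{\tau}=0$ with $\tilde{\lambda}_{\tau}$ forced to point into $(0,1)^n$, the sign that appears is $\mathrm{sign}\,\det(-\mathrm{diag}(\psi_i(\tilde{x}^{(0)})))\cdot\mathrm{sign}\,\det J_{\mathcal{H}}{\tilde{x}^{(0)}}$, and since $\det(-\mathrm{diag}(\psi_i(\tilde{x}^{(0)})))=(-1)^n\prod_i\psi_i(\tilde{x}^{(0)})$ this contributes exactly the factor $(-1)^n$ --- the same mechanism ($\det(-I_n)=(-1)^n$, coming from the $-\tilde{\lambda}$ in the homotopy) that produced the $(-1)^n$ in the determinant computed inside the proof of Theorem \ref{reg}. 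Assembling the two sign computations yields the asserted identity. The part I expect to be the actual work, not the idea, is the sign bookkeeping: being scrupulous about which of the two unit tangents is the positive one, about the permutation sign incurred when the $\tilde{\lambda}$-columns are moved past the $\tilde{x}$-columns inside the minors, and about the sign of the diagonal $\tilde{\lambda}$-block; a secondary care point is that, because $\tilde{\lambda}$ is a vector, the full derivative $D\mathcal{H}$ is wider than it is tall, so ``the tangent'' must be taken to be the one-dimensional tangent line of $\Gamma_{\tilde{x}^{(0)}}$ furnished by Theorem \ref{reg} (equivalently, the predictor direction selected in Step~3 of the algorithm), and not the whole null space.
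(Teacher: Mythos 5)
Your overall plan --- border the derivative with the predictor direction and reduce the sign to $\mathrm{sign}\det J_{\mathcal H}\tilde x^{(0)}$ times a universal factor --- is the same as the paper's, but the central device you invoke does not apply in this vector--parameter setting, and that is a genuine gap. The Cramer's-rule representation of the tangent as the normalized vector of alternating maximal minors, and the resulting collapse of the bordered determinant to a sum of squares of those minors, is a corank-one phenomenon: it requires the Jacobian to be $m\times(m+1)$. Here $D\mathcal H(\tilde x^{(0)},e)=[\,J_{\mathcal H}\tilde x^{(0)}\ |\ -\mathrm{diag}(\psi_i(\tilde x^{(0)}))\,]$ is $n\times 2n$, its kernel is $n$-dimensional, and there are $\binom{2n}{n}$ maximal minors rather than $2n$; no single vector of minors spans ``the'' tangent line. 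You flag this difficulty in your last sentence but do not resolve it: once you restrict to the one-dimensional direction actually traced, that direction is no longer expressible through minors of $D\mathcal H$, so the sum-of-squares identity on which your argument rests is unavailable. The paper instead fixes the predictor direction explicitly as $\tau^{(0)}=\bigl((\tilde R_1^{(0)})^{-1}\tilde R_2^{(0)}e,\,-e\bigr)$ with $\tilde R_1^{(0)}=J_{\mathcal H}\tilde x^{(0)}$ and $\tilde R_2^{(0)}=-\mathrm{diag}(\psi_i(\tilde x^{(0)}))$ (all $\lambda_i$ decreasing at unit rate), forms the $2n\times 2n$ bordered matrix, and evaluates its determinant by block elimination, arriving at $(-1)^n\det(\tilde R_1^{(0)})\det\bigl(ee^t(I+\mathcal A^t\mathcal A)\bigr)$ with $\mathcal A=(\tilde R_1^{(0)})^{-1}\tilde R_2^{(0)}$; the factor $(-1)^n$ is pulled out of the $n\times n$ Schur-complement block, not out of $\det(-\mathrm{diag}(\psi_i))$.

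A second concrete error is in your final sign bookkeeping: you attribute the factor $(-1)^n$ to $\det(-\mathrm{diag}(\psi_i(\tilde x^{(0)})))=(-1)^n\prod_i\psi_i(\tilde x^{(0)})$. That determinant also carries $\mathrm{sign}\prod_i\psi_i(\tilde x^{(0)})$, which need not be $+1$ (only $\psi_i(\tilde x^{(0)})\neq 0$ is assumed), so your computation would leave a spurious factor in the asserted identity. To complete the proof you need to (i) commit to an explicit formula for $\tilde\tau^{(0)}$ consistent with Step 3 of the algorithm, and (ii) evaluate the bordered determinant directly by block reduction as the paper does, rather than through the scalar-parameter minor calculus.
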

\begin{proof}
	From equation \ref{hybrid} we have\\
	$ \mathcal{H}(\tilde{x},\tilde{x}^{(0)},\tilde{\lambda})=$
$\left[\begin{array}{c}
\psi_1(\tilde{x})-{\lambda_1}\psi_1(\tilde{x}^{(0)})\\
\psi_2(\tilde{x})-{\lambda_2}\psi_2(\tilde{x}^{(0)})\\
\vdots\\
\psi_n(\tilde{x})-{\lambda_n}\psi_n(\tilde{x}^{(0)})
\end{array}\right]=0.$\\
	Now
	$\frac{\partial \mathcal{H}}{\partial \tilde{x} \partial \tilde{\lambda}}(\tilde{x}=\tilde{x}^{(0)},\tilde{\lambda}=e)=$$\begin{bmatrix} 
	J_{\mathcal{H}}\tilde{x}^{(0)} & \Psi(\tilde{x}^{(0)}) \\
    \end{bmatrix},$\\ where $\Psi(\tilde{x}^{(0)})=\begin{bmatrix}
    -\psi_1(\tilde{x}^{(0)}) & 0 & 0 & \cdots & 0\\
    0 & -\psi_2(\tilde{x}^{(0)}) & 0 & \cdots & 0\\
    \vdots & \vdots & \vdots & \vdots & \vdots\\
    0 & 0 & 0 & \cdots & -\psi_n(\tilde{x}^{(0)})\\
     \end{bmatrix}.$
	Let positive predictor direction be $\tau^{(0)}=\left[\begin{array}{c}
	\tilde{t} \\ -e
	\end{array}\right]=\left[\begin{array}{c}
	(\tilde{R}^{(0)}_1)^{(-1)}\tilde{R}_2^{(0)}e \\ -e
	\end{array}\right],$ where $\tilde{R}^{(0)}_1=J_{\mathcal{H}}\tilde{x}^{(0)}$  and $\tilde{R}^{(0)}_2=\Psi(\tilde{x}^{(0)}).$ Here $\text{det}(\tilde{R}^{(0)}_1)\neq 0.$  Therefore, 
	$\det\left[\begin{array}{c}
	\frac{\partial \mathcal{H}}{\partial \tilde{x}^{(0)} \partial \tilde{\lambda}}\\
	e\tilde{\tau} ^{(0)^t}\\
	\end{array}\right]$$=\det\left[\begin{array}{cc}
	R^{(0)}_1 & R^{(0)}_2\\
	ee^t(R^{(0)}_2)^t(R^{(0)}_1)^{(-t)} & -ee^t\\	
	\end{array}\right]$ \\ $= \det\left[\begin{array}{cc}
	R^{(0)}_1 & R^{(0)}_2\\
	0 & -ee^t-ee^t(R^{(0)}_2)^t(R^{(0)}_1)^{(-t)}(R^{(0)}_1)^{(-1)}R_2^{(0)} \\	
	\end{array}\right] \\ =\det(R^{(0)}_1)\det(-ee^t-ee^t(R^{(0)}_2)^t(R^{(0)}_1)^{(-t)}(R^{(0)}_1)^{(-1)}R_2^{(0)}) \\ =(-1)^n\det(R^{(0)}_1)\det(ee^t(I+\mathcal{A}^t\mathcal{A})),  $ where $\mathcal{A}=(R^{(0)}_1)^{(-1)}R_2^{(0)}.$ So\\ sign($\det \left[\begin{array}{c}
	\frac{\partial \mathcal{H}}{\partial \tilde{x} \partial \tilde{\lambda}}(\tilde{x}^{(0)},1)\\
	e\tilde{\tau}^{(0)^t}\\
	\end{array}\right]$)$=(-1)^n\text{sign}\det J_{\mathcal{H}}{\tilde{x}^{(0)}}.$ 
\end{proof}

\section{Numerical Example}\label{oliex}
To illustrate the use of the above three algorithms presented in the previous section, a numerical example\cite{murphy} is presented here. \\
Following Murphy et al. \cite{murphy} the oligopolistic market equilibrium problem follows:

Let there be $n$ firms, which supply a homogeneous product in a noncooperative fashion. Let $P(\tilde{Q}), \tilde{Q} \geq 0$ denote the inverse demand, where $\tilde{Q}= \sum_{i=1}^{n} Q_i,$ $Q_i \geq 0$ denote the $i$th firm's supply. Let $c_i(Q_i)$ be the total cost of supplying $Q_i$ units. Now the Nash equilibrium solution is a set of nonnegative output levels ${Q_1}^*,{Q_2}^*,\cdots,{Q_n}^*,$ such that ${Q_i}^*$ is an optimal solution to the following problem $\forall i\in \{1,2\cdots, n\}:$
\begin{eqnarray}\label{obj}
\text{maximize} \ {Q_iP(Q_i+{\tilde{Q}_i}^*)-c_i(Q_i)}
\end{eqnarray}
where ${\tilde{Q}_i}^*= \sum_{j \neq i}{Q_j}^*.$
Murphy et al. show that if $c_i(Q_i)$ is convex and continuously differentiable $\forall i \in\{1,2, \cdots,n\} $ and the inverse demand function $P(\tilde{Q})$ is strictly decreasing and continuously differentiable and the industry revenue curve $\tilde{Q}P(\tilde{Q})$ is concave, then \\
$({Q_1}^*,{Q_2}^*,\cdots,{Q_n}^*)$ is a Nash equilibrium solution if and only if
\begin{eqnarray}
[P(\tilde{Q}^*)+{Q_i}^*P'(\tilde{Q}^*)-{c_i}'({Q_i}^*)]{Q_i}^*=0 \\
{c_i}'({Q_i}^*)-P(\tilde{Q}^*)-{Q_i}^*P'(\tilde{Q}^*) \geq 0 \label{22}\\
{Q_i}^* \geq 0 \ \    \forall i\in \{1,2,\cdots,n\}
\end{eqnarray}

\noindent where $\tilde{Q}^*=\sum_{i=1}^{n}{Q_i}^*,$ which is a nonlinear complementarity problem with $f_i(z)={c_i}'({Q_i}^*)-P(\tilde{Q}^*)-{Q_i}^*P'(\tilde{Q}^*),  $ and $z_i= {Q_i}^*.$ \\
Note that here the functions $c_i(Q_i)$ and $-\tilde{Q}P(\tilde{Q})$ are convex. So the 1st order derivative of these two functions are increasing function. Hence the function $f_i(z)={c_i}'({Q_i}^*)-P(\tilde{Q}^*)-{Q_i}^*P'(\tilde{Q}^*)  $ is an increasing function.

Now consider an oligopoly with five firms, each with a total cost function of the form:
\begin{equation}\label{olix}
c_i(Q_i)=n_iQ_i+\frac{\beta_i}{\beta_i+1}{L_i}^{\frac{1}{\beta_i}}{Q_i}^{\frac{\beta_i+1}{\beta_i}}
\end{equation}
The demand curve is given by:
\begin{equation}
\tilde{Q}=5000P^{-1.1}, \ \   P(\tilde{Q})=5000^{1/1.1}\tilde{Q}^{-1/1.1}.
\end{equation}

\newpage
The parameters of the equation \ref{olix} for the five firms are given below: 
\begin{table}[ht]
	\caption{Value of parameters for five firms} 
	\centering
	\begin{tabular}{c c c c} 
		\hline\hline
		firm $i$ & $n_i$ & $L_i$ & $\beta_i$ \\ [0.5ex] 
		\hline
		1 & 10 & 5 & 1.2 \\ 
		2 & 8 & 5 & 1.1 \\
		3 & 6 & 5 & 1 \\
		4 & 4 & 5 & 0.8 \\
		5 & 2 & 5 & 0.6 \\ [1ex] 
		\hline
	\end{tabular}
\end{table}\\

 To solve this problem using the  homotopy method \ref{hybrid} with vector parameter $\lambda,$ we first take the initial point  $\tilde{x}^{(0)}=$$\left[\begin{array}{c} 
 1\\
 1\\
 1\\
 1\\
 1\\
 \end{array}\right],$ $\tilde{\lambda}^{(0)}=$$\left[\begin{array}{c} 
 1\\
 1\\
 1\\
 1\\
 1\\
 \end{array}\right].$ After $20$ iterations we obtain the result $\tilde{x}=$ $\left[\begin{array}{c} 
 15.429308\\
 12.498582\\
 9.663473\\
 7.165093\\
 5.132566\\
 \end{array}\right],$
 $\tilde{\lambda}=$$\left[\begin{array}{c}
 0\\
 0\\
 0\\
 0\\
 0\\
 \end{array}\right].$
 \section{Conclusion}
In this study we show the equivalency between the nonlinear complementarity problem and the system of nonlinear equations. We introduce a homotopy with vector parameter to find the solution of the system of nonlinear equations as well as the solution of the corresponding nonlinear complementarity problem. In this connection we obtain the sign of the positive tangent direction of the homotopy path. We construct a homotopy continuation method and show that the method reaches to solution through a bounded and smooth curve under the condition of the nonlinear function corresponding to the nonlinear complementarity problem being either increasing function or bounded function. Finally a real life oligopolistic market equilibrium problem is considered to illustrate our results.
\section{Acknowledgment}
The author A. Dutta is thankful to the Department of Science and Technology, Govt. of India, INSPIRE Fellowship Scheme for financial support.
\bibliographystyle{plain}
\bibliography{ref2}

\end{document}